\providecommand{\customgenericname}{}
\newcommand{\newcustomtheorem}[2]{%
  \newenvironment{#1}[1]
  {%
   \renewcommand\customgenericname{#2}%
   \renewcommand\theinnercustomgeneric{##1}%
   \innercustomgeneric
  }
  {\endinnercustomgeneric}
}
\newcommand\N{\mathbb{N}}
\newcommand\R{\mathbb{R}}
\newcommand\Z{\mathbb{Z}}
\newcommand\C{\mathbb{C}}
\newtheorem{thm}{Theorem}[section]
\newtheorem{corollary}[thm]{Corollary}
\newtheorem{lemma}[thm]{Lemma}
\newtheorem{conj}[thm]{Conjecture}
\theoremstyle{remark}
\newtheoremstyle{named}{}{}{\itshape}{}{\bfseries}{.}{.5em}{\thmnote{#3}}
\theoremstyle{named}
\let\originalleft\left
\let\originalright\right
\renewcommand{\left}{\mathopen{}\mathclose\bgroup\originalleft}
\renewcommand{\right}{\aftergroup\egroup\originalright}
\numberwithin{equation}{section}
\begin{document}	

\newcommand{\powerlog}{Q}
\newcommand{\glog}{{h_Q}}

\newcommand{\powerpoly}{P}
\newcommand{\gpoly}{{g_P}}

\newcommand{\doesnotdivide}{\not\hspace{2.5pt}\mid}
\newcommand{\divides}{\mid}
\newcommand{\oldkappa}{s}
\newcommand{\oldkfirst}{r_1}
\newcommand{\oldksecond}{r_2}

\title{Shifted convolution sums motivated by string theory}
\author{Ksenia Fedosova \& Kim Klinger-Logan}
\begin{abstract}
In \cite{CGPWW2021}, it was conjectured that a particular shifted sum of  even divisor sums vanishes, and in \cite{SDK}, a formal argument was given for this vanishing. Shifted convolution sums of this form appear when computing the Fourier expansion of coefficients for the low energy scattering amplitudes in type IIB string theory \cite{GMV2015} and have applications to subconvexity bounds of $L$-functions. In this article, we generalize the argument from~\cite{SDK} and rigorously evaluate shifted convolution of the divisor functions of the form $\displaystyle
\sum_{\stackrel{n_1+n_2=n}{n_1, n_2 \in \mathbb{Z} \setminus \{0\}}} \sigma_{\oldkfirst}(n_1) \sigma_{\oldksecond}(n_2) |n_1|^\powerpoly $ and $\displaystyle \sum_{\stackrel{n_1+n_2=n}{n_1, n_2 \in \mathbb{Z} \setminus \{0\}  }} \sigma_{\oldkfirst}(n_1) \sigma_{\oldksecond}(n_2) |n_1|^\powerlog \log|n_1|
$ where $\sigma_\nu(n) = \sum_{d \divides n} d^\nu$. In doing so, we derive exact identities for these sums and conjecture that particular sums similar to but different from the one found in \cite{CGPWW2021} will also vanish. \\

{\small {\it keywords:} shifted convolution sum, divisor function, Hurwitz zeta function}
\end{abstract}
\maketitle

\section{Introduction}

Shifted convolution sums have a long history of being studied by number theorists \cite{Blomer2004,cho2013evaluation, hahn2007convolution, huard2002elementary, kim2013convolution, lemire2006evaluation, Michel2004, ntienjem2017evaluation,   park2022multinomial}. Recently, the AdS/CFT correspondence and Yang-Mills theory
gives a surprising hint towards the exact evaluation of shifted convolution sums of  divisor functions. Namely, \cite{CGPWW2021} conjectured that for any $n\neq 0$ 
and 
\begin{align*}
\varphi(n_1,n_2)=  30 -\tfrac{n^2}{4 n_1^2}-\tfrac{n^2}{4 n_2^2}-\tfrac{3 n}{n_1}-\tfrac{3 n}{n_2}+\left(15-\tfrac{30 n_1}{n}\right) \log |n_1|+\left(15-\tfrac{30 n_2}{n}\right) \log |n_2|,
\end{align*}
the following equality holds:
\begin{equation}\label{eq:Chester_conjecture}
\displaystyle\sum_{\stackrel{n_1+n_2=n}{n_1, n_2 \in \mathbb{Z} \setminus \{ 0\}}} \varphi(n_1,n_2) \sigma_2(n_1) \sigma_2(n_2)=
2 \sigma_2(n) \left( \tfrac{\zeta(2) n^2}{4} + 15 \zeta'(-2) \right),
\end{equation}
where  $\zeta$ denotes the Riemann zeta function.
The fact that this sum is both infinite and and involves {\it even} divisor functions makes it particularly challenging to work with.
In \cite{SDK}, the authors give a formal argument verifying \eqref{eq:Chester_conjecture}. 

In this paper, we examine what can be rigorously proven using the argument presented in \cite{SDK}. We apply similar methods to extend these results to more general convolution sums and give a precise equality. 
More explicitly, we %generalize the methods in \cite{SDK} and 
{\it rigorously} evaluate for any $n \in \mathbb{Z} \setminus \{0\}$ and for certain $\oldkfirst,\oldksecond, \powerpoly, \powerlog  \in \mathbb{C}$,
\begin{equation}\label{eq:sums}
\sum_{\stackrel{n_1+n_2=n}{n_1, n_2 \in \mathbb{Z}  \setminus \{0\} }} \sigma_{\oldkfirst}(n_1) \sigma_{\oldksecond}(n_2) |n_1|^\powerpoly \text{ and } \sum_{\stackrel{n_1+n_2=n}{n_1, n_2 \in \mathbb{Z}  \setminus\{0\} }} \sigma_{\oldkfirst}(n_1) \sigma_{\oldksecond}(n_2) |n_1|^\powerlog \log|n_1|\end{equation}
in Theorems  \ref{thm:final_exact_result} and \ref{thm:final_exact_result2} respectively. In doing so we identify the obstacle to making the formal argument found in \cite{SDK} rigorous. The exact results found contain extra terms which do not appear in \eqref{eq:Chester_conjecture}. If we further assume that we may interchange the operations of taking an infinite sum and meromorphic continuation, this generalizes the argument in \cite{SDK}, and we recover \eqref{eq:Chester_conjecture} as desired.
With the help of this non-rigorous argument, we obtain other identities of a similar form, see Conjecture \ref{conj:zeros}.

The initial motivation for the study of sums  \eqref{eq:sums} is their appearance in string theory. The sum \eqref{eq:Chester_conjecture} arises in the Fourier modes of the homogeneous solution to differential equations involving Eisenstein series which yield coefficients for to the low energy scattering amplitude in type IIB string theory \cite{GMV2015}. More generally, sums of a similar form arise in the maximally supersymmetric $\mathcal{N}=4$ super-Yang-Mills theory when studying duality properties of certain correlation functions in the $1/N$ expansion \cite{FKL}.
However, as previously noted, shifted convolution sums more generally are of great interests to number theorists as well. Specifically, certain information on shifted convolution sums could yield progress on subconvexity problems for modular $L$-functions \cite{Blomer2004, Michel2004}.

\subsection{Main results}
 Let $c,d\in \mathbb{N}$. If $\gcd(c,d)$  divides~$n$, we let 
 \begin{align}\label{eq:definition_B}
     B := \{ b \in \mathbb{Z}: \exists\, a \in \mathbb{Z} \text{ such that } a d - b c = n \},\\
\label{def:bstar}
 b^* := \arg \min_{b \in B} \{ |b|\}\quad \text{and} \quad a^* := \frac{n+b^* c}{d}. 
 \end{align}
It is convenient to introduce
\begin{align}
u_{c,d} &:= \frac{ b^* \gcd(c,d)}{d}. \label{def:xn} 
\end{align}
Moreover, we denote 
$\displaystyle
\delta_{2 \divides P} = \begin{cases}
   1, \quad & \text{for }P \in 2 \Z, \\ 
   0, \quad & \text{otherwise.}
\end{cases}
$
 \begin{thm}\label{thm:final_exact_result}
		Let $n \in \Z \setminus \{0\}$, $\oldksecond\in \R$, $r_1 \in \C$ with $\text{Re}(\oldkfirst)<-1$, and $\powerpoly \in \mathbb{Z}$ be such that $\powerpoly \ll_{r_2} 0$ and let  $u_{c,d}$ be as in \eqref{def:xn}. Then 
		\begin{align}
		\sum_{ \stackrel{ n_1+n_2 = n}{ n_1, n_2 \in \mathbb{Z} \setminus \{ 0\}} } & \sigma_{\oldkfirst}(n_1) \sigma_{\oldksecond}(n_2) n_2^\powerpoly  = - n^\powerpoly \sigma_{\oldksecond}(n) \zeta(-\oldkfirst) \nonumber \\
		&+ (-1)^ \powerpoly 2  \delta_{2 \divides \powerpoly} \zeta(-\powerpoly) \zeta(-\oldksecond-\powerpoly) \sum_{d \divides n} d^{\oldkfirst+\powerpoly} \prod_{p | d} \left( 1 + (1-p^{\powerpoly})(p^{\oldksecond}+\ldots + p^{v_{p}(d) \oldksecond }) \right)  \label{eq:target_sum1}\\
		& + (-1)^\powerpoly \sum_{\stackrel{d \in \mathbb{N}}{d \doesnotdivide n}}   d^{\oldkfirst+\oldksecond+2 \powerpoly}   \sum_{\stackrel{0<c'\le d}{\gcd(c',d)|n}}  \frac{ \zeta(-\oldksecond-\powerpoly,\frac{c'}{d})}{\gcd(c',d)^{\powerpoly}}  \cdot \sum_{m\in \mathbb{Z}} (m+u_{c',d})^{\powerpoly}, \nonumber
		\end{align}		
  where the product $\prod_{p | d}$ is taken over all prime divisors $p$ of $d$ while the sum $\sum_{d | n}$ is taken over all divisors $d$ of $n$, and $v_p(d)$ denotes the valuation of $d$ at $p$.
		The first two terms in the right hand side of \eqref{eq:target_sum1} admit meromorphic continuations\footnote{Note that $2 \delta_{2 | P}$ may be continued as a meromorphic function in many ways. Specifically, we choose $e^{\pi i P} + 1 $ for $P \in \C$.} to $\powerpoly \in \mathbb{C}$ and $\oldkfirst \in\mathbb{C}$. The restriction of the second term to  $\oldksecond \in 2 \mathbb{Z}$ and $\oldksecond + \powerpoly \in \mathbb{N}_0$ equals 
		\[ \begin{cases}
		 - \zeta(\oldksecond) \sigma_{\oldkfirst}(n), \quad & \oldksecond +  \powerpoly = 0, \\
		 0, \quad & \oldksecond +  \powerpoly \in \N.
		\end{cases}
		\]
		Moreover, under the same condition, an inner sum in the third term admits a meromorphic continuation to $\powerpoly\in\mathbb{C}$ and vanishes:
$\displaystyle
\sum_{\stackrel{0<c'\le d}{\gcd(c',d)|n}}  \frac{ \zeta(-\oldksecond-\powerpoly,\frac{c'}{d})}{\gcd(c',d)^{\powerpoly}}  \cdot \sum_{m\in \mathbb{Z}} (m+u_{c',d})^{\powerpoly} = 0.
$
	\end{thm}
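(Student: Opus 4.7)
My plan is to establish the identity \eqref{eq:target_sum1} in the region of absolute convergence by a direct manipulation of the double sum, and then treat the meromorphic continuations and the special-value / vanishing statements separately.

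\emph{Stage 1 (Direct evaluation).} Under $\tRe(r_1) < -1$ and $P \ll_{r_2} 0$, the sum converges absolutely, so I would expand $\sigma_{r_1}(n_1) \sigma_{r_2}(n_2) = \sum_{d \divides n_1,\, d > 0} \sum_{c \divides n_2,\, c > 0} d^{r_1} c^{r_2}$ and interchange summations. Writing $n_1 = d a$ and $n_2 = -c b$ with $a, b \in \Z \setminus \{0\}$, the constraint $n_1 + n_2 = n$ becomes $a d - b c = n$, which is solvable iff $g := \gcd(c,d) \divides n$, with solution set $\{(a^* + (c/g) m,\, b^* + (d/g) m) : m \in \Z\}$. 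For fixed $(c,d)$, summing $n_2^P = (-1)^P c^P (b^* + (d/g) m)^P$ over $m$ yields $(-1)^P c^P (d/g)^P \sum_m (m + u_{c,d})^P$. Grouping $c$ by residue class $c' \in \{1, \ldots, d\}$ modulo $d$ (noting that $\gcd(c,d)$ and $u_{c,d}$ depend only on $c' \bmod d$), the inner sum over $c = c' + kd$ with $k \geq 0$ produces the Hurwitz zeta $\zeta(-r_2 - P, c'/d)$ after extracting $d^{r_2 + P}$.

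\emph{Stage 2 (Identifying the three terms).} The $d \doesnotdivide n$ contribution matches the third term of \eqref{eq:target_sum1} verbatim, since in that range neither $a = 0$ nor $b = 0$ is a contributing solution. The $d \divides n$ contribution requires interpreting $u_{c,d} = 0$ via the regularization $\sum_{m \in \Z} m^P = (1 + (-1)^P) \zeta(-P) = 2 \delta_{2 \divides P} \zeta(-P)$; the sum over $c$ at fixed $d$ then factors, and an Euler-product rearrangement transforms $\sum_{M \divides d} M^{-r_2} \prod_{p \divides M}(1 - p^{r_2+P}) = d^{-r_2} \prod_{p \divides d}[1 + (1-p^P)(p^{r_2} + \cdots + p^{v_p(d) r_2})]$, yielding the second term. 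Finally, both cases include the forbidden solution $a = 0$, which occurs whenever $c \divides n$ (giving $b = -n/c$ and contribution $d^{r_1} c^{r_2} n^P$); subtracting this over all $(c, d)$ with $c \divides n$ produces $-n^P \sigma_{r_2}(n) \zeta(-r_1)$, the first term.

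\emph{Stage 3 (Meromorphic continuations and special values).} The first term is manifestly meromorphic in $r_1$. The second term is meromorphic in $(r_1, r_2, P)$ since $\zeta(-P), \zeta(-r_2-P)$ are meromorphic, the product over $p \divides d$ is polynomial in $p^P, p^{r_2}$, and $2\delta_{2 \divides P}$ is replaced by $e^{i\pi P} + 1$. When $r_2 \in 2\Z$ and $r_2 + P = 0$ (so $P = -r_2$ is even), the identity $(1 - p^{-r_2})(p^{r_2} + \cdots + p^{v_p(d) r_2}) = p^{v_p(d) r_2} - 1$ collapses each prime factor to $p^{v_p(d) r_2}$, telescoping the divisor sum to $\sum_{d \divides n} d^{r_1} = \sigma_{r_1}(n)$; combined with $\zeta(0) = -1/2$ this yields $-\zeta(r_2) \sigma_{r_1}(n)$. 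For $r_2 + P \in \N$, evenness of $r_2$ forces $P$ and $r_2+P$ even, so the trivial zero $\zeta(-(r_2+P)) = 0$ annihilates the term.

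\emph{Stage 4 (Vanishing of the inner sum, and main obstacle).} For the last assertion I would write $\sum_{m \in \Z}(m + u)^P = \zeta(-P, u) + (-1)^P \zeta(-P, 1 - u)$ (as meromorphic functions in $P$) and exploit the involution $c' \mapsto d - c'$ on the index set $\{c' : \gcd(c', d) \divides n\}$. The congruence characterization $B = \{b : b c' \equiv -n \pmod{d}\}$ gives $B_{d-c', d} = -B_{c', d}$, hence $u_{d - c', d} = -u_{c', d}$, so for even $P$ the bracket $\zeta(-P, u_{c',d}) + (-1)^P \zeta(-P, 1 - u_{c',d})$ is invariant under the involution. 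Meanwhile, since $r_2 + P$ is a non-negative even integer, $\zeta(-(r_2+P), x) = -B_{r_2+P+1}(x)/(r_2+P+1)$, and the Bernoulli symmetry $B_k(1-x) = (-1)^k B_k(x)$ for the odd integer $k = r_2 + P + 1$ gives $\zeta(-(r_2+P), 1 - c'/d) = -\zeta(-(r_2+P), c'/d)$; hence contributions from $c'$ and $d - c'$ cancel in pairs. The main obstacle is handling the fixed points of the involution (namely $c' = d/2$ when $d$ is even) and the boundary value $u_{c',d} = 0$, for which individual vanishing must be verified directly using $\zeta(-k, 1/2)$ identities and the fact that $\zeta(-k) = 0$ for $k$ positive even.
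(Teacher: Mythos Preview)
Your approach is essentially the paper's: expand the divisor sums into $\sum_{c,d} d^{r_1}c^{r_2}\sum_{ad-bc=n}(-bc)^P$, parametrize solutions by $m\in\Z$, separate $d\mid n$ from $d\nmid n$, collapse the $c$-sum via residues mod $d$ into Hurwitz zetas, and kill the inner sum by the involution $c'\mapsto d-c'$ together with the reflection $\zeta(s,1-a)=(-1)^{s+1}\zeta(s,a)$ for $s\in -\N_0$.

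Two small corrections. In Stage~3 your claim ``evenness of $r_2$ forces $P$ and $r_2+P$ even'' is false (take $r_2=2$, $P=-1$); the paper instead splits into $P$ odd (where $\delta_{2\mid P}=0$) and $r_2+P\in 2\N$ (where $\zeta(-r_2-P)=0$), which together with $r_2\in 2\Z$ exhaust $r_2+P\in\N$. The same parity split is what is missing from your Stage~4, though the paper avoids it altogether: writing the inner sum as $\tfrac12\sum_{c'}\bigl[\zeta(-r_2-P,\tfrac{c'}{d})+(-1)^P\zeta(-r_2-P,1-\tfrac{c'}{d})\bigr]\gcd(c',d)^{-P}\sum_m(m+u_{c',d})^P$ and applying the reflection identity gives the bracket as $[1-(-1)^{r_2}]\zeta(-r_2-P,\tfrac{c'}{d})$, which vanishes for every $c'$ once $r_2$ is even, regardless of the parity of $P$.

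Finally, your ``main obstacle'' is not one. Since $d\nmid n$ forces $b^*\neq 0$, and minimality gives $|b^*|\le d/(2\gcd(c',d))$, one always has $u_{c',d}\in(-\tfrac12,\tfrac12]\setminus\{0\}$; and the fixed point $c'=d/2$ requires no separate treatment because the bracket above vanishes termwise.
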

It is very tempting to change the order of taking a meromorphic continuation and an infinite sum and thus deduce from Theorem \ref{thm:final_exact_result} that for $n \in \mathbb{Z} \setminus \{ 0 \}$ and   $\oldkfirst\in\mathbb{R}-\{-1\}$,
	\begin{align}\label{eq:false_equation1a}&
	\sum_{ \stackrel{ n_1+n_2 = n}{ n_1, n_2 \in \mathbb{Z} \setminus \{ 0\}} } \sigma_{\oldkfirst}(n_1) \sigma_{\oldksecond}(n_2) n_2^\powerpoly 
	\end{align}
	appears to be equal to 
		\begin{align}\label{eq:false_equation1}- n^ \powerpoly \sigma_{\oldksecond}(n) \zeta(-\oldkfirst)+ \begin{cases}
	- \zeta(\oldksecond) \sigma_{\oldkfirst}(n), &\quad \oldksecond +\powerpoly = 0, \\
	0, & \quad \oldksecond + \powerpoly \in \N.
	\end{cases}
	\end{align}
However, we cannot justify the vanishing of the last term in \eqref{eq:target_sum1} and thus cannot infer that \eqref{eq:false_equation1a} is equal to \eqref{eq:false_equation1}.

The following statement is a result analogous to Theorem~\ref{thm:final_exact_result} but where each summand is multiplied by $\log|n_2|$. 
	\begin{thm}\label{thm:final_exact_result2}
		Let $n \in \Z \setminus \{0\}$,  $\text{Re}(\oldkfirst)<-1$, $\powerlog \in \mathbb{Z}$ be such that $\powerlog \ll_{r_2} 0$, and ${\text{Re}(\oldksecond)<-2-Q}$ and let   $u_{c,d}$ be as in \eqref{def:xn}. Then 
		\begin{align}\label{eq:target_sum2}
		&  \sum_{ \stackrel{ n_1+n_2 = n}{ n_1, n_2 \in \mathbb{Z} \setminus \{ 0\}} } \sigma_{\oldkfirst}(n_1) \sigma_{\oldksecond}(n_2) n_2^\powerlog \log|n_2| 
		%\nonumber\\ 
		%&\ \ \ \ \ \ \ \  \
		 =  - \zeta(-\oldkfirst) \sigma_{\oldksecond}(n) n^\powerlog \log |n|\nonumber \\
		& \ \ \ \ \ \ \ \ \ \ \ \ + 2 (-1)^\powerlog   \sum_{d \divides n} \delta_{2 \divides \powerlog}\, d^{\,\powerlog+\oldkfirst} \zeta(-\powerlog) \left( \frac{\partial}{\partial t_1} + \frac{\partial}{\partial t_2} \right) \left. \left( \sum_{c \in \N} c^{t_1} \gcd(c,d)^{t_2} \right)\right|_{\substack{t_1=\oldksecond+\powerlog \\t_2=-\powerlog}}\nonumber \\
		& \ \ \ \ \ \ \ \ \ \  \ \ + 2 (-1)^\powerlog  \sum_{d \divides n} \delta_{2 \divides \powerlog}\, d^{\,\powerlog+\oldkfirst} (\log(d) \zeta(-\powerlog) -\zeta'(-\powerlog))  \sum_{c \in \N} \frac{c^{\oldksecond+\powerlog}}{\gcd(c,d)^\powerlog} \\
		& \ \ \ \ \ \ \ \ \ \  \ \ + (-1)^\powerlog \sum_{\stackrel{d \in \N}{d \doesnotdivide n}} d^{\oldkfirst} \left(\frac{ c d}{\gcd(c,d)}\right)^{\powerlog} \log \left| \frac{ c d}{\gcd(c,d)} \right| \sum_{m \in \mathbb{Z}} (m+u_{c,d})^{\powerlog} \nonumber \\
		&\ \ \ \ \ \ \ \ \ \  \ \  + (-1)^\powerlog \sum_{\stackrel{d \in \N}{d \doesnotdivide n}} \frac{d^{\oldkfirst+\oldksecond+ 2 \powerlog}}{(\gcd(c,d))^\powerlog} \sum_{\stackrel{0 < c' \le d}{\gcd(c', d) \divides n}} \zeta(-\oldksecond-\powerlog, \tfrac{c'}{d})  \sum_{m \in \mathbb{Z}}\frac{ \log|m + u_{c',d}|}{ (m+u_{c',d})^{-\powerlog}  } . 	\nonumber	\end{align}
		The first three terms admit a meromorphic continuation\footnote{As before, note that $2 \delta_{2 | Q}$ may be continued as a meromorphic function in many ways. Specifically, we choose $e^{\pi i Q} + 1 $ for $Q \in \C$.} in $Q$ and $\oldkfirst$  to $\C$.
		For certain values of $\oldkfirst, \oldksecond, \powerlog$ the first three terms significantly simplify; specifically, 
		\begin{align*}
		    \begin{cases}
		    - \zeta(-\oldkfirst) \sigma_{\oldksecond}(n) n^\powerlog \log |n|, & \quad  \powerlog \in \mathbb{N}$ and $\oldksecond + \powerlog \in \mathbb{N}, \\
		    \sigma_{\oldkfirst}(n) \zeta'(-\oldksecond), & \quad \powerlog =0\text{ and } \oldkfirst, \oldksecond \in \N, \\
		   (\frac{\log |n|}{2}-\log (2 \pi ))\sigma_0(n)
		   , & \quad \powerlog =0$ and $\oldkfirst= \oldksecond=0.
		    \end{cases}
		\end{align*}
		The meromorphic continuations of the inner sums of the last two terms to $\powerlog \in \mathbb{N}_0$ vanish; namely, \[		\sum_{m \in \mathbb{Z}} (m+u_{c,d})^{\powerlog} = 0 \]
		and 
		\[
		\sum_{\stackrel{0 < c' \le d}{\gcd(c'd, d) \divides n}} \zeta(-\oldksecond-\powerlog, \tfrac{c'}{d})  \sum_{m \in \mathbb{Z}} (m+u_{c',d})^\powerlog \log|m + u_{c',d}| = 0.
		\]
	\end{thm}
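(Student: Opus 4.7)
The plan is to mirror the proof of Theorem~\ref{thm:final_exact_result}, treating the $\log|n_2|$ factor via differentiation in an auxiliary parameter. Concretely, since $|n_2|^P \log|n_2| = \partial_P |n_2|^P$, I expect that a suitable differentiation in $P$ of the identity of Theorem~\ref{thm:final_exact_result}, after separating the sign of $n_2$ from its absolute value, yields the desired identity at $P = Q$. The appearance on the right-hand side of \eqref{eq:target_sum2} of both $\zeta'(-Q)$ and the mixed partial derivative $(\partial_{t_1}+\partial_{t_2})$ of a two-variable Dirichlet-like series is precisely the signature of such a differentiation.

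The detailed steps would be as follows. First, expand $\sigma_{r_1}(n_1)=\sum_{d\divides n_1}d^{r_1}$ and reorganize the double sum by writing $n_1=-dc$ so that $n_2=n+dc$; split according to whether $d\divides n$ or $d\doesnotdivide n$. Second, expand $\sigma_{r_2}(n_2)$ and swap summation orders; the inner sum over $n_2$ running through an arithmetic progression, weighted by $n_2^Q \log|n_2|$, is expressed via Hurwitz zeta values $\zeta(s,x)$ evaluated at $s=-r_2-Q$ and similar points, together with their $s$-derivatives $\zeta'(s,x)$, the latter accounting for the logarithm. Third, identify the five terms on the right-hand side of \eqref{eq:target_sum2}: the $\log|n|$ term from the $d\divides n$ case; two terms from differentiating the multiplicative structure of $\sum_c c^{t_1}\gcd(c,d)^{t_2}$ in its arguments $t_1,t_2$; and two terms from the $d\doesnotdivide n$ case, carrying $\log|cd/\gcd(c,d)|$ and the log-weighted Hurwitz sum respectively. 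The specific simplifications of the first three terms then follow from standard special values such as $\zeta(-k,x)=-B_{k+1}(x)/(k+1)$, $\zeta'(0)=-\tfrac{1}{2}\log(2\pi)$, and the explicit formula for $\zeta'(-2k)$ in terms of $\zeta(2k+1)$.

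The vanishing of the inner sums in the last two terms reduces to two identities. The first, $\sum_{m\in\Z}(m+u)^Q=0$ for $Q\in\N_0$, follows from the Bernoulli reflection $B_k(1-x)=(-1)^k B_k(x)$ together with the representation $\sum_{m\in\Z}(m+u)^P = \zeta(-P,u)+(-1)^P\zeta(-P,1-u)$. The second, its log-weighted analogue, expands as $-\zeta'(-Q,u)-(-1)^Q\zeta'(-Q,1-u)=0$ and requires differentiating a suitable form of the Hurwitz zeta reflection in the $s$-variable at $s=-Q$.

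The main obstacle is twofold. First, the branch-cut contributions that arise when differentiating $n_2^P=e^{P\log n_2}$ for $n_2<0$ produce purely imaginary $i\pi$-terms which must be tracked and cancelled to yield a real-valued identity involving $\log|n_2|$ rather than $\log n_2$. Second, the vanishing of the log-weighted Hurwitz sum at $Q\in\N_0$ requires a finer identity than the Bernoulli reflection used in the pure-power case, since differentiation in $s$ of $\zeta(s,u)+(-1)^s\zeta(s,1-u)$ also produces an $i\pi$-contribution that must cancel against the log piece. Additionally, the interchange of infinite summation with meromorphic continuation must be justified at each step, paralleling but going beyond the delicate analysis already needed in Theorem~\ref{thm:final_exact_result}.
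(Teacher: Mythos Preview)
Your overall architecture---expand the divisor sums, use the $ad-bc=n$ parametrization, split into $d\divides n$ and $d\doesnotdivide n$, and recognize the $\log$-weighted pieces as $t_1,t_2$-derivatives of the two-variable Dirichlet series---matches the paper. The paper does not literally differentiate Theorem~\ref{thm:final_exact_result} in $P$ (that identity is only established for $P\in\Z$, so differentiation in $P$ is not directly available); instead it proves a parallel lemma for $f(x)=x^Q\log|x|$ by direct computation via Lemma~\ref{lemma:rewriting_stuff_in_terms_of_Hurwitz_zeta}. Your ``detailed steps'' paragraph is essentially this, so the discrepancy is cosmetic.

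There is, however, a genuine gap in your treatment of the second vanishing statement. You assert that the log-weighted inner sum itself vanishes, i.e.\ that $-\zeta'(-Q,u)-(-1)^Q\zeta'(-Q,1-u)=0$. This is false: already at $Q=0$, Lerch's formula $\zeta'(0,u)=\log\Gamma(u)-\tfrac12\log(2\pi)$ gives $-\zeta'(0,u)-\zeta'(0,1-u)=\log(2\sin\pi u)\neq 0$. What the theorem actually claims is that the \emph{sum over $c'$} vanishes, and the paper proves this by the same pairing $c'\leftrightarrow d-c'$ used for \eqref{imposter_that_pretends_to_vanish_but_wil_probably_become_a_factor_of_L_functions}. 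Under that pairing $u_{d-c',d}=-u_{c',d}$, and \eqref{eq:lemma2.2full_sum_log} gives $\sum_m (m-u)^Q\log|m-u|=(-1)^Q\sum_m (m+u)^Q\log|m+u|$, so the log-weighted factor is preserved; the cancellation comes instead from the $\zeta(-r_2-Q,\cdot)$ factor via the Bernoulli reflection \eqref{ref:vanishing_for_bernoulli}, exactly as in the pure-power case. In other words, the vanishing mechanism lives on the Hurwitz coefficient, not on the log-sum, and no ``finer identity'' beyond \eqref{ref:vanishing_for_bernoulli} and \eqref{eq:lemma2.2full_sum_log} is needed.

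Finally, your first ``obstacle'' concerning branch cuts and imaginary $i\pi$-contributions is a red herring in this setup: the paper works with $|bc|$ and $\log|bc|$ from the outset (via Lemma~\ref{lemma:rewriting_stuff_in_terms_of_Hurwitz_zeta} applied to a real-valued $f$), so no complex logarithm ever enters and there is nothing to cancel.
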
	
	
	 Again, it is tempting to change the order of taking a meromorphic continuation and an infinite sum and thus deduce from Theorem \ref{thm:final_exact_result2} that for $\oldkfirst$ and $\oldksecond$ as above and $n \in \mathbb{Z} \setminus \{ 0 \}$, 
\begin{align}
\sum_{ \stackrel{ n_1+n_2 = n}{ n_1, n_2 \in \mathbb{Z} \setminus \{ 0\}} } \sigma_{\oldkfirst}(n_1)& \sigma_{\oldksecond}(n_2) n_2^\powerlog \log|n_2| 
\end{align} appears to be equal to
\begin{align}\label{eq:false_equation2} \begin{cases}
		    - \zeta(-\oldkfirst) \sigma_{\oldksecond}(n) n^\powerlog \log |n|, & \quad  \powerlog \in \mathbb{N}$ and $\oldksecond + \powerlog \in \mathbb{N}, \\
		    \sigma_{\oldkfirst}(n) \zeta'(-\oldksecond), & \quad \powerlog =0\text{ and } \oldkfirst, \oldksecond \in \N, \\
		   (\frac{\log |n|}{2}-\log (2 \pi ))\sigma_0(n)
		   , & \quad \powerlog =0$ and $\oldkfirst= \oldksecond=0.\\
		    \end{cases}
\end{align}
However, we cannot justify the vanishing of the last two terms of \eqref{eq:target_sum2} and so we cannot deduce this identity. 

At the same time, if we assume \eqref{eq:false_equation1} and \eqref{eq:false_equation2}, we would recover \eqref{eq:Chester_conjecture}. Moreover,
we obtain other conjectural identities which are supported by numerical evidence. 
\begin{conj}\label{conj:zeros}
	For any $n \in \mathbb{Z} \setminus \{ 0\}$, 
	\begin{equation}\label{eq:divisor_formula_1}
	\sum_{\stackrel{n_1, n_2 \in \mathbb{Z} \setminus \{ 0\}}{n_1+n_2=n}}\sigma_0(n_1)\sigma_0(n_2) \Big[ 2 + \tfrac{n_2-n_1}{n} \log |  \tfrac{n_1}{n_2}  |   \Big] = \sigma_0(n)  (2-\log \left(4 \pi ^2 |n|\right) ) .   
	\end{equation}
Moreover, for $n_1 n_2 \neq 0$, set \begin{align*}\varphi(n_1,n_2) = &  \tfrac{11}{3} -20 n_1 n^{-1}+20 n_1^2 n^{-2}  \\
 &  +\left(1 -12 n_1 n^{-1}+30 n_1^2 n^{-2}-20 n_1^3 n^{-3}\right) \log \left|n_1\right| \\
   & +\left(1 -12 n_2 n^{-1}+30 n_2^2 n^{-2}-20 n_2^3 n^{-3}\right) \log \left|n_2\right|.
 \end{align*}
 Then we have
\begin{align}
    \sum_{\stackrel{n_1,n_2 \in \mathbb{Z}\setminus \{ 0 \} }{n_1+n_2=n}}  \sigma_0(n_1) &  \sigma_0(n_2) \varphi(n_1,n_2)
   =\sigma_0(n) \left( \tfrac{11}{3}- \log (4 \pi^2 |n|)\right).
\end{align}
\end{conj}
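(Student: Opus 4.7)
Both identities in Conjecture \ref{conj:zeros} should follow from straightforward linear combinations of the formal evaluations \eqref{eq:false_equation1} and \eqref{eq:false_equation2}, applied at $r_1=r_2=0$. The plan is to rewrite each weight $\varphi(n_1,n_2)$ as a linear combination of monomials of the four types $n_1^P$, $n_2^P$, $n_1^Q\log|n_1|$, $n_2^Q\log|n_2|$ (i.e., with matching $n_i$ inside the log), so that every resulting shifted convolution is directly covered by one of the two formal identities. Using $\zeta(0)=-\tfrac12$ together with the $n_1\leftrightarrow n_2$ symmetry of $\sigma_0(n_1)\sigma_0(n_2)$, the relevant evaluations read
\[
\sum_{\substack{n_1+n_2=n\\n_1,n_2\in\Z\setminus\{0\}}}\sigma_0(n_1)\sigma_0(n_2)\,n_2^k=\begin{cases}\sigma_0(n),&k=0,\\ \tfrac{n^k}{2}\sigma_0(n),&k\ge 1,\end{cases}
\]
and
\[
\sum_{\substack{n_1+n_2=n\\n_1,n_2\in\Z\setminus\{0\}}}\sigma_0(n_1)\sigma_0(n_2)\,n_2^k\log|n_2|=\begin{cases}\bigl(\tfrac{\log|n|}{2}-\log(2\pi)\bigr)\sigma_0(n),&k=0,\\ \tfrac{n^k}{2}\sigma_0(n)\log|n|,&k\ge 1,\end{cases}
\]
with the same values obtained with $n_1$ in place of $n_2$.

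For the first identity, the only preparatory algebra is to use $\tfrac{n_1}{n}+\tfrac{n_2}{n}=1$ to eliminate the cross terms $\tfrac{n_1}{n}\log|n_2|$ and $\tfrac{n_2}{n}\log|n_1|$ (which do not fit the shape of Theorem \ref{thm:final_exact_result2}) and to rewrite
\[
2+\tfrac{n_2-n_1}{n}\log\bigl|\tfrac{n_1}{n_2}\bigr|=2+\log|n_1|+\log|n_2|-\tfrac{2n_1}{n}\log|n_1|-\tfrac{2n_2}{n}\log|n_2|.
\]
Termwise substitution of the boxed values then yields $\sigma_0(n)\bigl(2-\log(4\pi^2|n|)\bigr)$, as claimed. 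The second identity needs no preprocessing: direct substitution of the $k=0,1,2,3$ cases above collapses the polynomial block $\tfrac{11}{3}-\tfrac{20n_1}{n}+\tfrac{20n_1^2}{n^2}$ to $\tfrac{11}{3}\sigma_0(n)$ (the coefficients $-10$ and $+10$ cancel), and each log block $1-\tfrac{12n_i}{n}+\tfrac{30n_i^2}{n^2}-\tfrac{20n_i^3}{n^3}$ (applied to $\log|n_i|$) to $-\tfrac12\sigma_0(n)\log|n|-\log(2\pi)\sigma_0(n)$ (using $-6+15-10=-1$); summing over $i=1,2$ gives the desired $\sigma_0(n)\bigl(\tfrac{11}{3}-\log(4\pi^2|n|)\bigr)$.

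The main obstacle is not in this bookkeeping but in the foundational use of \eqref{eq:false_equation1} and \eqref{eq:false_equation2}: those identities are themselves only formal, as emphasized after each of Theorems \ref{thm:final_exact_result} and \ref{thm:final_exact_result2}. Their validity amounts to interchanging meromorphic continuation in $P$ (or $Q$) with the infinite convolution sum, equivalently to proving that the residual last terms of these theorems vanish as genuine limits of the pre-continuation absolutely convergent sums, rather than merely as meromorphically continued values. In particular, a rigorous proof of Conjecture \ref{conj:zeros} would require establishing the genuine (not just continued) vanishing of $\sum_{m\in\Z}(m+u_{c,d})^Q$ and of its logarithmic analogue, which appears to demand a new idea beyond what is currently available in the paper; absent that, the argument above gives only a conditional derivation of Conjecture \ref{conj:zeros}, consistent with the cited numerical evidence.
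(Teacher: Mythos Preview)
Your proposal is correct and follows precisely the route the paper itself takes: the paper does not prove Conjecture~\ref{conj:zeros} but presents it as a consequence of the \emph{formal} identities \eqref{eq:false_equation1} and \eqref{eq:false_equation2}, supported by numerical evidence, and you have carried out exactly that bookkeeping (including the necessary rewriting of the cross terms $\tfrac{n_j}{n}\log|n_i|$ via $n_1+n_2=n$) while correctly flagging that the unjustified interchange of meromorphic continuation and summation is the sole obstruction to rigor.
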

In general, the numerical evidence does not support \eqref{eq:false_equation1} or \eqref{eq:false_equation2}.\footnote{ That is, take $d>0$ and let 
\begin{equation*}\label{eq:varphi}
\varphi(n_1,n_2) = \sum_{j=0}^{d}  \left( a_j n_1^j +  b_j n_2^j +  c_j n_1^j \log|n_1| + d_j n_2^j \log |n_2| \right),
\end{equation*}
with $a_j,b_j,c_j, d_j \in \C$ chosen in such a way that $\varphi(n_1,n-n_1) = o(|n_1|^{-1})$. If the informal argument could be justified in the current form, then the sum $\sum_{\stackrel{n_1,n_2 \in \mathbb{Z}\setminus \{ 0 \} }{n_1+n_2=n}}  \sigma_0(n_1)   \sigma_0(n_2) \varphi(n_1,n_2)$ should be equal to 
\[
\sigma_0(n) \left[ a_0 + b_0 + (c_0+d_0)\log (\sqrt{|n|}/2 \pi)+ \frac{1}{2} \sum_{j=1}^d \left(a_j + b_j +  (c_j+d_j) \log|n| \right) n^j \right].
\]
However, while the numerical evidence supports the claim for $d\le 4$, for $d \ge 5$ the equality doesn't hold, as calculations with the help of Pari/GP indicate. Similarly, an informal argument cannot be justified for $\oldkfirst \neq 0, \oldksecond \neq 0$.} 
As we will prove in an upcoming article with Danylo Radchenko, the differences between sums similar to those in Conjecture \ref{conj:zeros} and their informal evaluations with the help of \eqref{eq:false_equation1} and \eqref{eq:false_equation2} depend on Fourier coefficients of certain Hecke eigenforms. %Interestingly, the informal argument seems to recover the leading terms in the asymptotic expansion of such sums as $n \to \infty$.

\subsection{Related research}
Sums involving divisor functions have received a considerable attention from number theorists due to their connection to the subconvexity of $L$-functions and other problems, see \cite{cho2013evaluation, hahn2007convolution, huard2002elementary, kim2013convolution, lemire2006evaluation, ntienjem2017evaluation,  park2022multinomial, Ramanujan}. However, much of what has been classically studied relates to odd divisor functions, that is, $\sigma_\nu(\cdot )$ for odd $\nu$.  Furthermore, the mentioned sources usually demand that $n_1$ and $n_2$, while satisfying $n_1+n_2=n$, belong to a finite set. For example, it is typical to examine truncated shifted convolution sums where $0 < n_1 < n$ and $0 < n_2 < n$. 
      
There are, however, results which do not demand that $n_1$ and $n_2$ belong to a finite set. In \cite{MR4163822}, Diamantis studies 
    \begin{equation}\label{eq:diamantis_convolution}
%    D_h(\alpha, \beta;s) := 
    \sum_{\stackrel{n_1 \in \mathbb{N}}{n_1>h}} \sigma_\alpha(n_1) \sigma_\beta(h-n_1) n_1^{-s}
    \end{equation}
    for  $h\in \mathbb{Z} \text{ and } \alpha, \beta, s \in \mathbb{C}$ was considered.
There, the author  characterizes the ratios of non-critical values of $L$-functions, corresponding to normalized weight $k$ cuspidal eigenforms,  in terms of  \eqref{eq:diamantis_convolution}. Additionally, Diamantis analytically continues \eqref{eq:diamantis_convolution}  in $s$ (although not to the whole $\mathbb{C}$)  by expressing it as 
a sum of Estermann $L$-functions (which in turn are linear combinations of Hurwitz zeta functions). We note that Theorem \ref{thm:final_exact_result} also expresses a convolution sum in terms of Hurwitz zeta functions; however,  it does not appear to be of the form as in \cite{MR4163822}.

\subsection{Acknowledgments} The authors would like to thank Michael Green for originally suggesting this problem. The authors would also like to thank Danylo Radchenko and Stephen D.\,Miller for their insightful conversations.  K.\,K-L. acknowledges support from NSF DMS-2001909.

\section{Preliminaries}
In this section, we include supporting lemmas needed for the main theorems. Specifically, Lemma~\ref{lemma:Hurwitz_equality} is a known result regarding Hurwitz zeta functions.  We also extend upon some results proven in \cite{SDK}  (Lemma \ref{lemma:Lemma5.1inKLSR} and Lemma \ref{lemma:rewriting_stuff_in_terms_of_Hurwitz_zeta}\footnote{ Lemma \ref{lemma:rewriting_stuff_in_terms_of_Hurwitz_zeta} is given in \cite{SDK} for a special case but is extended here without much change in the argument for this more general set up. }).

For $\oldkappa \in \mathbb{Z}$ with $\oldkappa$ sufficiently large and $a  \in \C $ with $\text{Re}(a) \in (0,1)$,
\begin{align}
\sum_{m \in \mathbb{Z}} (m+a)^{-\oldkappa}
&=\zeta (\oldkappa,a) +  e^{\pi i \oldkappa} \zeta (\oldkappa,1-a). \label{eq:regularization_of_infinite_sum}
\end{align}
We note %the sum that we started with, $\sum_{m \in \mathbb{Z}} (m+a)^{-\oldkappa}$, required that  $\oldkappa \in \mathbb{Z}_{>1}$. On the other hand, 
that the right hand side of \eqref{eq:regularization_of_infinite_sum} is defined for all $a \in \C$ and is a meromorphic function in $\oldkappa \in \mathbb{C}$. 
Similarly, for $\oldkappa \in \mathbb{Z}$ with $\text{Re}(\oldkappa)$ sufficiently large and $a \in \C $ with $\text{Re}(a) \in (0,1)$,
\begin{align}
\sum_{m \in \mathbb{Z}}  (m+a)^{-\oldkappa} \log |m+a| 
&= \partial_\oldkappa \zeta (\oldkappa,a) +  e^{\pi i \oldkappa} \partial_\oldkappa \zeta (\oldkappa,1-a).\label{eq:regularization_of_infinite_sum2}
\end{align}

For the following lemma, we write 
$ \displaystyle\sum_{m \in \mathbb{Z}} (m+a)^{-\oldkappa} $ and  $ \displaystyle\sum_{m \in \mathbb{Z}} (m+a)^{-\oldkappa} \log | m+a|$  in the sense of meromorphic continuation as in 
~\eqref{eq:regularization_of_infinite_sum} and \eqref{eq:regularization_of_infinite_sum2}.

\begin{lemma}\label{lemma:Hurwitz_equality}
For $\oldkappa \in - \N_0$ and any $a \in \mathbb{C}$, 
	\begin{align}\label{ref:vanishing_for_bernoulli}
	\zeta(\oldkappa , 1-a) = (-1)^{\oldkappa +1} \zeta(\oldkappa , a)
	\end{align}	
	and for any $\oldkappa \in \mathbb{Z}$ and $a \in \C \setminus  \mathbb{Z}$,
	\begin{align}\label{eq:lemma2.2full_sum}
	    \sum_{m \in \mathbb{Z}} (m+a)^{-\oldkappa} = 	(-1)^{\oldkappa  }    \sum_{m \in \mathbb{Z}} (m-a)^{-\oldkappa}
	\end{align}
	and 
		\begin{align}\label{eq:lemma2.2full_sum_log}
	  \sum_{m \in \mathbb{Z}}  (m+a)^{-\oldkappa} \log |m+a| = 	(-1)^{\oldkappa  }    \sum_{m \in \mathbb{Z}}  (m-a)^{-\oldkappa} \log |m-a|.
	\end{align}
Moreover, for $a \in \C \setminus  \mathbb{Z}$ and $\oldkappa \in - \mathbb{N}_0$,
	\begin{align}\label{eq:crazy_vanihsing}
	    \sum_{m \in \mathbb{Z}} (m+a)^{-\oldkappa} = 0.
	\end{align}
\end{lemma}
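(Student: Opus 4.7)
My plan for \eqref{ref:vanishing_for_bernoulli} is to reduce to the well-known closed form of Hurwitz zeta at non-positive integers. Writing $\oldkappa = -n$ with $n \in \N_0$, one has $\zeta(-n,a) = -B_{n+1}(a)/(n+1)$, a polynomial in $a$, so both sides of \eqref{ref:vanishing_for_bernoulli} extend automatically from real $a$ to $a \in \C$. The classical symmetry $B_m(1-x) = (-1)^m B_m(x)$ with $m = n+1$ then yields $\zeta(-n,1-a) = (-1)^{n+1}\zeta(-n,a)$, and since $\oldkappa \in \Z$ we have $(-1)^{n+1} = (-1)^{\oldkappa+1}$, as required.

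For \eqref{eq:lemma2.2full_sum} and \eqref{eq:lemma2.2full_sum_log}, I would first work in a half-plane where the left-hand sum converges absolutely, perform the change of variables $m \mapsto -m$, and use the unambiguous (because $\oldkappa \in \Z$) identity $(-m+a)^{-\oldkappa} = (-1)^{\oldkappa}(m-a)^{-\oldkappa}$; in the logarithmic case I further use that $\log|-m+a| = \log|m-a|$. To extend the resulting equalities to every $\oldkappa \in \Z$ for which both sides are defined, I would interpret the divergent sums through the meromorphic continuations \eqref{eq:regularization_of_infinite_sum} and \eqref{eq:regularization_of_infinite_sum2} and invoke the identity theorem in $\oldkappa$. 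The case of $a$ outside the strip $\{\text{Re}\,a \in (0,1)\}$ is handled by the invariance of the left-hand sums under $a \mapsto a+1$ via reindexing $m$.

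Finally, for the vanishing \eqref{eq:crazy_vanihsing}, I would combine the two previous parts. Taking $\oldkappa = -n$ with $n \in \N_0$ and applying \eqref{eq:regularization_of_infinite_sum} yields
\[
\sum_{m \in \Z}(m+a)^{-\oldkappa} = \zeta(-n,a) + e^{-\pi i n}\zeta(-n,1-a) = \zeta(-n,a) + (-1)^n\zeta(-n,1-a),
\]
and substituting the relation $\zeta(-n,1-a) = (-1)^{n+1}\zeta(-n,a)$ from the first part makes the two terms cancel exactly. I do not foresee a serious technical obstacle here: the proofs are elementary manipulations of the Bernoulli-polynomial formula and of absolutely convergent sums, together with meromorphic continuation. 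The only point requiring a little care is the consistent choice of branch in the factors $(-1)^{\oldkappa}$ and $e^{\pi i \oldkappa}$ appearing in the two regularisations, which is automatic once $\oldkappa$ is restricted to $\Z$.
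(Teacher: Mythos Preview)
Your proposal is correct and follows essentially the same route as the paper: the Bernoulli-polynomial formula and reflection $B_m(1-x)=(-1)^m B_m(x)$ for \eqref{ref:vanishing_for_bernoulli}, and then the combination of \eqref{eq:regularization_of_infinite_sum} with \eqref{ref:vanishing_for_bernoulli} to obtain the cancellation in \eqref{eq:crazy_vanihsing}. The only cosmetic difference is in \eqref{eq:lemma2.2full_sum}--\eqref{eq:lemma2.2full_sum_log}: you pass through the convergent region via the substitution $m\mapsto -m$ and then invoke meromorphic continuation in $\oldkappa$, whereas the paper works directly with the Hurwitz-zeta expression \eqref{eq:regularization_of_infinite_sum} and observes that replacing $a$ by $1-a$ (followed by the reindexing $m\mapsto m+1$) swaps the two terms and produces the factor $(-1)^{\oldkappa}$---this sidesteps the identity-theorem step but is the same computation in disguise.
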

 \begin{proof}
 Without loss of generality assume $\text{Re}(a) \in (0,1)$.  
	We note that values of $\zeta(\oldkappa,a)$ for $\oldkappa \in -\mathbb{N}$ are related to the Bernoulli polynomials~\cite[(25.11.14)]{NIST}:
	\[
	\zeta(\oldkappa, a) =  \frac{B_{-\oldkappa + 1}(a)}{\oldkappa	-1},
	\]
	and the Bernoulli polynomials satisfy  \cite[(24.4.3)]{NIST}:
	\[
	B_{-\oldkappa + 1}(a) = (-1)^{-\oldkappa + 1} B_{-\oldkappa + 1}(1-a).
	\]
This proves the first statement of the lemma. 
	The equality \eqref{eq:lemma2.2full_sum} follows from \eqref{eq:regularization_of_infinite_sum}:
	\begin{align} 
	(-1)^{-\oldkappa} \zeta (\oldkappa,1-a)+\zeta (\oldkappa,a)
	 =   (-1)^{-\oldkappa} \left[ \zeta (\oldkappa,1-a)+ (-1)^{-\oldkappa}  \zeta (\oldkappa,1-(1-a))  \right]. \nonumber
	\end{align}
	Similarly, \eqref{eq:lemma2.2full_sum_log} can be obtained similar to \eqref{eq:lemma2.2full_sum}. 
In order to show \eqref{eq:crazy_vanihsing}, we note that by 	\eqref{ref:vanishing_for_bernoulli}, \[ (-1)^{-\oldkappa} \zeta (\oldkappa,1-a)+\zeta (\oldkappa,a) = ((-1)^{-\oldkappa-\oldkappa+1} +1) \zeta (\oldkappa,a)\] vanishes.
\end{proof} 

The following lemma is contained in \cite[Lemma 5.1]{SDK}. However, for the convenience of the reader, we added more details to the proof of the statement.
\begin{lemma}\label{lemma:Lemma5.1inKLSR} For $\text{Re} (s)>1 $, $\text{Re} (s+k)>1 $  and $d \in \mathbb{N}$,
\begin{align}\label{dirichlet_series_with_loggcd}
\sum_{c \in \mathbb{N}} \frac{\log | \gcd(c,d) | }{c^s} = \zeta(s) \sum_{\ell | d} \Lambda(\ell) \ell^{-s},    
\end{align}
	where $\Lambda$ is the von Mangoldt function. Moreover, 
\begin{align}\label{dirichlet_series_with_gcd}
\sum_{c \in \mathbb{N}} \frac{  (\gcd(c,d))^k }{c^{s+k}} = \zeta(s+k) \prod_{p | d} \left( 1 + (1-p^{-k})(p^{-s}+\ldots + p^{-v_{p}(d)s }) \right),    
\end{align}
where the product is taken over all possible prime divisors $p$ and  $v_{p}(d)$ denotes the valuation of~$d$ at prime $p$. 
The sum in \eqref{dirichlet_series_with_gcd} admits a meromorphic continuation in both $s$ and $k$, for which the following holds:
\begin{align}\label{eq:evaluations_gcd_dirichlet}
    \zeta(s+k) \prod_{p | d} \left( 1 + (1-p^{-k})(p^{-s}+\ldots + p^{-v_{p}(d)s }) \right)  = \begin{cases}
0, \quad & s+k \in - 2 \mathbb{N}, \\ 
- d^k / 2, \quad & s+k = 0.
\end{cases}
\end{align}
\end{lemma}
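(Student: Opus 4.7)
The plan is to handle the three claims by standard Dirichlet series manipulations. For \eqref{dirichlet_series_with_loggcd}, I would apply the von Mangoldt identity $\log n = \sum_{\ell \mid n}\Lambda(\ell)$ to $n = \gcd(c,d)$. Since $\ell \mid \gcd(c,d)$ is equivalent to $\ell \mid c$ and $\ell \mid d$, interchanging summations (justified by absolute convergence for $\mathrm{Re}(s)>1$) gives
\[ \sum_{c\in\N}\frac{\log\gcd(c,d)}{c^s} = \sum_{\ell\mid d}\Lambda(\ell)\sum_{\substack{c\in\N\\ \ell\mid c}}c^{-s} = \zeta(s)\sum_{\ell\mid d}\Lambda(\ell)\ell^{-s}, \]
after substituting $c=\ell c'$ in the inner sum.

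For \eqref{dirichlet_series_with_gcd}, I would exploit that the function $c\mapsto\gcd(c,d)^k c^{-(s+k)}$ is multiplicative in $c$ (with $d$ fixed), so the Dirichlet series factors as an Euler product $\prod_p L_p(s,k)$. The local factor at $p\nmid d$ is immediately $(1-p^{-(s+k)})^{-1}$. For $p\mid d$ with $e := v_p(d)$, splitting the sum $\sum_{j\ge 0}$ at $j=e$ yields $\sum_{j=0}^{e-1}p^{-js}$ plus a geometric tail $p^{-es}/(1-p^{-(s+k)})$; a short algebraic simplification recasts this as $(1-p^{-(s+k)})^{-1}\bigl[1+(1-p^{-k})(p^{-s}+\cdots+p^{-es})\bigr]$. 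Factoring the Euler product of $\zeta(s+k)$ out of $\prod_p L_p$ then produces the claimed identity.

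For the meromorphic continuation and special values in \eqref{eq:evaluations_gcd_dirichlet}, the key observation is that $\prod_{p\mid d}\bigl[1+(1-p^{-k})(p^{-s}+\cdots+p^{-v_p(d)s})\bigr]$ is a finite linear combination of monomials $p^{-as-bk}$, hence entire in $(s,k)\in\C^2$; the right-hand side of \eqref{dirichlet_series_with_gcd} therefore furnishes the meromorphic extension. At $s+k \in -2\N$ the factor $\zeta(s+k)$ vanishes while the product remains finite, so the expression is $0$. At $s+k=0$, substituting $k=-s$ in each local factor produces a telescoping simplification
\[ 1+(1-p^s)(p^{-s}+\cdots+p^{-es}) = 1 + \sum_{j=1}^{e}p^{-js} - \sum_{j=0}^{e-1}p^{-js} = p^{-es}, \]
so the product collapses to $\prod_{p\mid d}p^{-v_p(d)s} = d^{-s} = d^{k}$, and multiplying by $\zeta(0) = -\tfrac12$ yields $-d^k/2$.

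No substantive obstacle arises; the only step requiring genuine care is the algebraic manipulation of the local Euler factor (and the telescoping at $s+k=0$), while the rest is routine analytic bookkeeping.
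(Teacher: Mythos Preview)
Your argument is correct and follows the same route as the paper's: the paper simply cites \cite[Lemma~5.1]{SDK} for the two identities, records the convergence bounds, and says the special values in \eqref{eq:evaluations_gcd_dirichlet} ``follow by direct substitution.'' You have supplied precisely those omitted details---the von Mangoldt expansion for \eqref{dirichlet_series_with_loggcd}, the Euler-product computation for \eqref{dirichlet_series_with_gcd}, and the telescoping at $s+k=0$---so your write-up is a self-contained version of the same proof rather than a different approach.
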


\begin{proof}
The first two identities follow from \cite[Lemma 5.1]{SDK}.
We note that for any $d \in \mathbb{N}$ and $\text{Re} (s)>1 $,
\begin{align}\label{eq:domain_of_conv_1}
    \left| \frac{\log |\gcd(c,d)|}{c^s} \right| \le \left| \frac{\log |c|}{c^s} \right| = o(|c|^{-1+\varepsilon}), \quad c \to \infty 
\end{align}
for an arbitrary $\varepsilon>0$, and 
\[
|(\gcd(c,d))^{k}| \le 
\begin{cases}
1, \quad & \text{Re}(k) < 0, \\
|c^k|, \quad & \text{Re}(k) \ge 0,
\end{cases}
\]
thus 
\begin{align}\label{eq:domain_of_conv_2}
\left| \frac{(\gcd(c,d))^{k}}{  c^{s+k}   }  \right| \le \max \{ |c^{-s}|, |c^{-k-s}| \}.
\end{align}
The estimates \eqref{eq:domain_of_conv_1} and \eqref{eq:domain_of_conv_2} imply that for $\text{Re} (s)>1 $, $\text{Re} (s+k)>1$, the sums in \eqref{dirichlet_series_with_loggcd} and \eqref{dirichlet_series_with_gcd} converge. 
The right sides of both can be meromorphically continued by taking the meromorphic continuation of the Riemann zeta function and the meromorphic continuations of $\sum_{\ell | d} \Lambda(\ell) \ell^{-s}$ and $ \prod_{p | d} \left( 1 + (1-p^{-k})(p^{\oldksecond}+\ldots + p^{-v_{p}(d)s }) \right)$. The formula \eqref{eq:evaluations_gcd_dirichlet} follows from \eqref{dirichlet_series_with_gcd} by a direct substitution.
\end{proof}
We additionally need the following lemma that evaluates derivatives of meromorphic continuation of \eqref{dirichlet_series_with_gcd} at specific points.
\begin{corollary}\label{ref:lemma_about_Q=0} For $d\in\mathbb{N}$, %and $\oldkfirst \in 2 \mathbb{Z}$,
    \begin{align}\label{eq:derivative1ofdanylodirichlet}
        \left. \frac{\partial}{\partial t_1} \left( \sum_{c \in \mathbb{N}} c^{t_1} \gcd(c,d)^{t_2} \right)\right|_{\substack{t_1 = \oldkfirst\\ t_2=0}} = - \zeta'(-\oldkfirst)
    \end{align}
    and 
        \begin{align}\label{eq:derivative2ofdanylodirichlet}        \left. \frac{\partial}{\partial t_2} \left( \sum_{c \in \mathbb{N}} c^{t_1} \gcd(c,d)^{t_2} \right)\right|_{\substack{t_1 = \oldkfirst\\ t_2=0}} = \zeta(- \oldkfirst) \sum_{\ell \divides d} \Lambda(\ell) \ell^{\oldkfirst}.
    \end{align}
\end{corollary}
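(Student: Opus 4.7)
The plan is to leverage the closed-form factorization provided by Lemma~\ref{lemma:Lemma5.1inKLSR} and then to differentiate it carefully. After the change of variables $s = -t_1 - t_2$ and $k = t_2$ in~\eqref{dirichlet_series_with_gcd}, the Dirichlet series factors as
\[
F(t_1,t_2) \;:=\; \sum_{c \in \mathbb{N}} c^{t_1} \gcd(c,d)^{t_2} \;=\; \zeta(-t_1)\, G(t_1,t_2),
\]
where
\[
G(t_1,t_2) \;:=\; \prod_{p \mid d} \Big( 1 + (1 - p^{-t_2}) \sum_{j=1}^{v_p(d)} p^{\,j(t_1+t_2)} \Big),
\]
the equality being understood in the sense of meromorphic continuation outside the region $\text{Re}(t_1) < -1$, $\text{Re}(t_1+t_2) < -1$. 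The structural observation driving everything is that each factor of $G$ carries $(1 - p^{-t_2})$, which vanishes at $t_2 = 0$. Consequently $G(t_1, 0) \equiv 1$, and any derivative of $G$ evaluated at $t_2 = 0$ retains only those contributions in which the differentiation hits a $(1-p^{-t_2})$ term.

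To establish \eqref{eq:derivative1ofdanylodirichlet}, I would differentiate $F$ in $t_1$. Since $\partial_{t_1}$ does not act on $(1 - p^{-t_2})$, after applying the product rule to $G$ every surviving summand still carries a $(1 - p^{-t_2})$ factor and therefore vanishes at $t_2 = 0$. Hence $\partial_{t_1} G(r_1, 0) = 0$, and together with $G(r_1,0) = 1$ this yields $\partial_{t_1} F(r_1, 0) = -\zeta'(-r_1)$, as required.

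For~\eqref{eq:derivative2ofdanylodirichlet}, I would differentiate $F$ in $t_2$, obtaining $\partial_{t_2} F(r_1, 0) = \zeta(-r_1)\, \partial_{t_2} G(r_1, 0)$ because $G(r_1,0) = 1$. Applying the product rule to $G$ and restricting to $t_2 = 0$, the only surviving terms are those in which $\partial_{t_2}$ hits the $(1-p^{-t_2})$ factor of exactly one prime; all other factors reduce to $1$. Since $\partial_{t_2}(1 - p^{-t_2})|_{t_2=0} = \log p$, this produces $\log p \cdot \sum_{j=1}^{v_p(d)} p^{\,j r_1}$ for each $p \mid d$. Reindexing the outer sum through $\ell = p^j$ and recalling that $\Lambda(\ell) = \log p$ precisely when $\ell$ is a prime power $p^j$ gives
\[
\partial_{t_2} G(r_1, 0) \;=\; \sum_{p \mid d} \log p \sum_{j=1}^{v_p(d)} p^{\,j r_1} \;=\; \sum_{\ell \mid d} \Lambda(\ell)\, \ell^{\,r_1},
\]
which is the claim. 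I anticipate no real obstacle: the entire argument is a short bookkeeping exercise made transparent by the vanishing of $(1-p^{-t_2})$ at $t_2 = 0$. The one mild point requiring care is that for $r_1$ outside the region of absolute convergence of the original Dirichlet series all identities must be read through the meromorphic continuation furnished by Lemma~\ref{lemma:Lemma5.1inKLSR}, but since both sides are holomorphic in a neighborhood of $(r_1, 0)$ (assuming $r_1 \neq -1$), the differentiation and evaluation commute with the continuation.
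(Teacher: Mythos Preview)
Your proposal is correct and follows essentially the same route as the paper: both rewrite the Dirichlet series via the factorization of Lemma~\ref{lemma:Lemma5.1inKLSR} as $\zeta(-t_1)$ times the Euler-type product $G(t_1,t_2)$, observe that each nontrivial factor carries $(1-p^{-t_2})$ so that $G(t_1,0)=1$ and $\partial_{t_1}G|_{t_2=0}=0$, and then compute $\partial_{t_2}G|_{t_2=0}$ by letting the derivative hit $(1-p^{-t_2})$ and repackaging the resulting sum over prime powers via the von Mangoldt function. Your closing remark on reading everything through meromorphic continuation (and the need for $r_1\neq -1$) is a welcome point that the paper leaves implicit.
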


\begin{proof}
We use \eqref{dirichlet_series_with_gcd} with $t_1=-s-k$ and $t_2=k$,
\begin{align}\label{eq:danylos_series_in_different_variables}
\sum_{c \in \mathbb{N}}  c^{t_1}  \gcd(c,d)^{t_2} = \zeta(-t_1) \prod_{p | d} \left( 1 + (1-p^{-t_2})(p^{t_1+t_2}+\ldots + p^{v_{p}(d) (t_1+t_2) }) \right).
\end{align}
To establish  \eqref{eq:derivative1ofdanylodirichlet}, we note 
\[
\frac{\partial}{\partial t_1} \left( 1 + (1-p^{-t_2})(p^{t_1+t_2}+\ldots + p^{v_{p}(d) (t_1+t_2) }) \right)\big|_{t_2=0} = 0.
\]
Thus, taking the derivative of both sides of  \eqref{eq:danylos_series_in_different_variables}, considering the meromorphic continuation, and substituting $t_2=0, t_1=\oldkfirst$, we obtain
\begin{align*}
  \frac{\partial}{\partial t_1}& \left(\left. \sum_{c \in \mathbb{N}} c^{t_1} \gcd(c,d)^{t_2} \right)\right|_{\substack{t_1 = \oldkfirst\\ t_2=0}}\\  &= - \left. \zeta'(-\oldkfirst) \prod_{p | d} \left( 1 + (1-p^{-t_2})(p^{t_1+t_2}+\ldots + p^{v_{p}(d) (t_1+t_2) }) \right)\right|_{\substack{t_1 = \oldkfirst\\ t_2=0}}\\
%&+ \left. \zeta(-\oldkfirst) \prod_{p | d} \left( (1-p^{-t_2}) \frac{\partial }{\partial p_1}(p^{t_1+t_2}+\ldots + p^{v_{p}(d) (t_1+t_2) }) \right)\right|_{t_2=0, t_1=\oldkfirst} \\
&= -\zeta'(-\oldkfirst),
\end{align*}
which implies \eqref{eq:derivative1ofdanylodirichlet}. In order to show  \eqref{eq:derivative2ofdanylodirichlet}, we note that \[
\left. \frac{\partial }{\partial t_2} (1-p^{-t_2}) \right|_{t_2 = 0} = p^{-t_2} \log p \big|_{t_2=0} = \log p
\]
and
\[
\left. 1+ (1-p^{-t_2}) (p^{t_1+t_2}+\ldots + p^{v_{p}(d) (t_1+t_2) }) \right|_{t_2=0} = 1.
\]
We thus get  
\begin{align*}
     \frac{\partial}{\partial t_2}  ( 1+ (1-p^{-t_2})& (p^{t_1+t_2}+\ldots + p^{v_{p}(d) (t_1+t_2) }) ) \big|_{t_2=0} \\
    &= \left. \frac{\partial }{\partial t_2} (1-p^{-t_2}) \right|_{t_2 = 0} \left. \cdot (p^{t_1+t_2}+\ldots + p^{v_{p}(d) (t_1+t_2) }) \right|_{t_2=0} \\
    & = \log (p) \cdot (p^{t_1} + \ldots + p^{v_p(d) t_1}).
\end{align*}
Summing over all possible prime divisors of $d$, we obtain 
\begin{align*}
    \zeta(-\oldkfirst) \frac{\partial}{\partial t_2}  & \left.  \left(  \prod_{p | d} (1+(1-p^{-t_2}) (p^{t_1+t_2}+\ldots + p^{v_{p}(d) (t_1+t_2) }) )\right)\right|_{\substack{t_1 = \oldkfirst\\ t_2=0}} \\
= & \zeta(-\oldkfirst) \sum_{p \divides d} \log (p) \cdot (p^{\oldkfirst} + \ldots + p^{v_p(d) \oldkfirst}).
\end{align*}
We can instead write this as a sum over all divisors of $d$ with the help of the von Mangoldt function
\[
\zeta(- \oldkfirst) \sum_{\ell \divides d} \Lambda(\ell) \ell^{\oldkfirst}
\]
yielding \eqref{eq:derivative2ofdanylodirichlet}.
\end{proof}

The following lemma allows us to rewrite the sums over $a, b \in \mathbb{Z} \setminus \{0\}$ with $ad-bc=n$ for a fixed $n \in \Z \setminus \{ 0\}$ as a sum over integers for  $c, d \in \mathbb{N}$. 
Additionally, we denote 
\begin{align}
v_{c,d} &:= \frac{c d}{\gcd(c,d)} \label{def:yn}
\end{align} and 
\[
\delta_{c | n} := \begin{cases}
1, \quad & \text{$c$ divides $n$,}\\
0, \quad & \text{$c$ does not divide $n$},
\end{cases}
\]
for  $c, d \in \mathbb{N}$.
We use the notation that $2$ divides $0$, thus we write  
$
\delta_{2 | 0} = 1.
$

\begin{lemma}\label{lemma:rewriting_stuff_in_terms_of_Hurwitz_zeta}	
  Let $n \in \Z \setminus \{ 0\}$, $c,d\in \mathbb{N}$,  $f: \mathbb{Z} \to \mathbb{C}$ and let  \begin{align*}
\mathcal{T}_n(c,d) & :=\sum_{\stackrel{a, b \in \mathbb{Z} \setminus \{0\}  }{ad-bc=n}}  f(bc),  
\end{align*} 
where the sum above is over all possible $a,b \in \mathbb{Z} \setminus \{0\}$ with $ad-bc = n$; if there exists no such $c,d$, then the sum is equal to zero.  
Then $\mathcal{T}_n(c,d)$ can be rewritten as 
\begin{equation}\label{eq:tncd}
\mathcal{T}_n(c,d) = \begin{cases}  \sum_{m \in \mathbb{Z}} f((m+u_{c,d}) v_{c,d}) - \delta_{c \divides n}f(-n)-\delta_{d \divides n} f(0), \quad & \gcd(c,d)  \divides n, \\
0, \quad & \gcd(c,d) \doesnotdivide n
\end{cases}
\end{equation}
for $u_{c,d}, v_{c,d} \in \mathbb{C}$ defined in \eqref{def:xn} and \eqref{def:yn}, respectively.
\end{lemma}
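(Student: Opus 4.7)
The plan is to parametrize the integer solutions of the linear Diophantine equation $ad-bc=n$ explicitly and then reindex the sum defining $\mathcal{T}_n(c,d)$.

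First, I would dispose of the case $\gcd(c,d) \doesnotdivide n$: by B\'ezout, the equation $ad-bc=n$ admits no integer solutions at all, so the sum defining $\mathcal{T}_n(c,d)$ is vacuous and equals $0$. This handles the second branch of \eqref{eq:tncd}.

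Now assume $\gcd(c,d) \divides n$. I would invoke the distinguished solution $(a^*,b^*)$ fixed in \eqref{def:bstar} and recall the standard description of the full solution set: every pair $(a,b) \in \mathbb{Z}^2$ with $ad-bc=n$ has the form
$$a = a^* + m\,\frac{c}{\gcd(c,d)}, \qquad b = b^* + m\,\frac{d}{\gcd(c,d)}, \qquad m \in \mathbb{Z}.$$
Substituting into $bc$ gives $bc = b^*c + m\,\frac{cd}{\gcd(c,d)} = b^*c + m\,v_{c,d}$. A one-line check using the definitions \eqref{def:xn} and \eqref{def:yn} shows $b^*c = \frac{b^*\gcd(c,d)}{d}\cdot\frac{cd}{\gcd(c,d)} = u_{c,d}\,v_{c,d}$, so that $bc = (m+u_{c,d})\,v_{c,d}$. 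Summing $f(bc)$ over \emph{all} integer solutions (without the restriction $a,b \neq 0$) therefore produces exactly $\sum_{m\in\mathbb{Z}} f((m+u_{c,d})\,v_{c,d})$.

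Finally, I would correct this sum by subtracting the contributions of the solutions with $a = 0$ or $b = 0$ that must be excluded from $\mathcal{T}_n(c,d)$. If $a=0$ then $b = -n/c$, which is integral precisely when $c \divides n$, and in that case $bc = -n$, contributing $f(-n)$; if $b=0$ then $a = n/d$, integral exactly when $d \divides n$, and $bc = 0$, contributing $f(0)$. These two cases are mutually exclusive since $n \neq 0$, so removing them from the reindexed sum yields the first branch of \eqref{eq:tncd}. The only delicate point is the identity $b^*c = u_{c,d}\,v_{c,d}$ and the observation that a different minimizer $b^*$ merely shifts the summation index $m$; both are immediate from the definitions, so the lemma reduces to a bookkeeping exercise once the parametrization is in place and no convergence issues intervene.
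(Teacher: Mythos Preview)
Your proof is correct and follows essentially the same route as the paper: parametrize the solution set of $ad-bc=n$ by $m\in\mathbb{Z}$ via the distinguished pair $(a^*,b^*)$, identify $bc=(m+u_{c,d})v_{c,d}$, and then remove the terms corresponding to $a=0$ or $b=0$. Your handling of the exclusion step is in fact slightly more direct than the paper's, which carries out a case split on whether $d\mid n$ and $c\mid n$ and explicitly locates the offending values of $m$; you instead observe immediately that $a=0$ forces $b=-n/c$ (integral iff $c\mid n$, giving $bc=-n$) and $b=0$ forces $a=n/d$ (integral iff $d\mid n$, giving $bc=0$), with mutual exclusivity guaranteed by $n\neq 0$.
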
 

\begin{proof}	
 If $\gcd(c,d) \doesnotdivide n$, then there do not exist $a, b \in \mathbb{Z}$ such that ${ad-bc=n}$ holds, and  $\mathcal{T}_n(c,d)=0$. We let $B, b^*$ and $a^*$ be as in \eqref{eq:definition_B} and \eqref{def:bstar}.
 
 We note that the set $B$  is parameterized by $m\in\mathbb{Z}$, and each of its elements takes the form
\begin{align}
b(m)=b^*+\frac{m}{\gcd(c,d)}d.
\end{align}
The corresponding $a(m) \in \mathbb{Z}$, defined by the property $a(m) d - b(m) c = n$, is equal to 
\begin{equation}\label{eq:amdef}
    a(m)
    =a^*+\frac{m}{\gcd(c,d)}c.
\end{equation}
We rewrite
\begin{align*}
b(m)c   = \left(\frac{ b^* \gcd(c,d)}{d} + m \right)\frac{ c d}{\gcd(c,d)} = (m+u_{c,d}) v_{c,d}
\end{align*}
and obtain
\[
\mathcal{T}_n(c,d) = 
\begin{cases}
  \sum^* f((m+u_{c,d}) v_{c,d}), \quad & \gcd(c,d) \divides n, \\ 
0, \quad & \gcd(c,d) \doesnotdivide  n,
\end{cases}
\]
where the sum $\sum^* $ is taken over all possible $m \in \mathbb{Z}$ such that $a(m) b(m) \neq 0$. We consider the following possibilities:
\begin{enumerate}[(i)]
	\item Let $d\divides n$, then the definition of $B$ implies  $ b^* = 0$. Thus the definition of $u_{c,d}$, \eqref{def:xn}, implies  $u_{c,d}=0$ and \[
	a(m)b(m) = \frac{m d}{\gcd(c,d)} \frac{n + \frac{m}{\gcd(c,d)} c d}{d} = \frac{m d}{\gcd(c,d)} \left(\frac{n}{d} + \frac{m c}{\gcd(c,d)}  \right).
	\]
	\begin{enumerate}[(1)]
	    \item If $c \doesnotdivide n$, then $\frac{n}{d} + \frac{m c}{\gcd(c,d)} =0$ has no integer solutions in $m$. Thus, the only integer $m$ such that $a(m)b(m)=0$ is $m = 0$. In this case \[
	f((m+u_{c,d}) v_{c,d})|_{m = 0} = f(0).
	\]
	    \item If $c \divides n$, then either $m=0$ or $m= - \frac{n \gcd(c,d)}{cd}$ in which case 
	    \[
	f((m+u_{c,d}) v_{c,d})|_{m  = - \frac{n \gcd(c,d)}{cd}} = f(-n).
	\]
	\end{enumerate}
	\item Let $d \doesnotdivide n$.
	\begin{enumerate}[(1)]
	\item If  $c \doesnotdivide n$, then $a(m) b(m)$ cannot be equal to zero.
	\item If $c \divides  n$, then $b^* = - \frac{n}{c}$, and $a^*=0$. Moreover, 	
	  when 
	$a(m) b(m) = 0 $ then $a(m) = 0$ and so $m = 0.$
	\end{enumerate}
\end{enumerate}
Thus, if $d\divides n$, the term $f(0)$ has to be omitted, and if $c\divides n$, the term $f(-n)$ has to be omitted. That implies \eqref{eq:tncd} and  finishes the proof. 
\end{proof}

	\section{Proofs of main theorems}\label{sec:some_precise_results}

The main goals of this section is to rigorously evaluate \eqref{eq:sums} and prove Theorems \ref{thm:final_exact_result} and \ref{thm:final_exact_result2}. 

\subsection{Proof of Theorem \ref{thm:final_exact_result}} 

We  first 
need the following lemma:

	\begin{lemma}\label{lemma:ldn_is_equal_to_something_poly}
		Let $n \in \mathbb{Z} \setminus \{0\}$, $\oldksecond \in \R$,  $d \in \mathbb{N}$ and $\powerpoly \in \mathbb{Z}$ with $\powerpoly \ll_{\oldksecond} 0$, then the sum
		\begin{align}\label{eq:sum_over_c_of_bc_polynomial}
		\sum_{c \in \mathbb{N}} c^{\oldksecond} \sum_{\stackrel{a, b \in \mathbb{Z} \setminus \{0\}  }{ad-bc=n}} (bc)^\powerpoly
		\end{align}
	 equals 
		\[
		(-1)^{\powerpoly+1} n^\powerpoly \sigma_{\oldksecond}(n) + 2 d^\powerpoly \delta_{2 \divides \powerpoly} \zeta(-\powerpoly) \zeta(-\oldksecond-\powerpoly) \prod_{p | d} \left( 1 + (1-p^{\powerpoly})(p^{\oldksecond}+\ldots + p^{v_{p}(d) \oldksecond }) \right),
		\]
	for $d \divides n$ 	and	 equals 
		\begin{align*} (-1)^{\powerpoly+1} n^\powerpoly \sigma_{\oldksecond}(n)
	+  d^{\oldksecond+2\powerpoly}   \sum_{\stackrel{0<c'\le d}{\gcd(c',d)|n}}  \frac{ \zeta(-\oldksecond-\powerpoly,\frac{c'}{d})}{\gcd(c',d)^{\powerpoly}}  \sum_{m\in \mathbb{Z}} (m+u_{c',d})^{\powerpoly},
	\end{align*}
	for $d \doesnotdivide n$.
	\end{lemma}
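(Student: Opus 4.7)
The plan is to apply Lemma \ref{lemma:rewriting_stuff_in_terms_of_Hurwitz_zeta} to the inner sum with $f(x)=x^\powerpoly$. This rewrites
\[ \sum_{\stackrel{a,b \in \mathbb{Z}\setminus\{0\}}{ad-bc=n}} (bc)^\powerpoly
= \sum_{m \in \mathbb{Z}} \bigl((m+u_{c,d})\,v_{c,d}\bigr)^\powerpoly -\delta_{d\divides n}\,0^\powerpoly - \delta_{c\divides n}(-n)^\powerpoly \]
when $\gcd(c,d)\divides n$ and zero otherwise. Multiplying by $c^{\oldksecond}$ and summing over $c\in\mathbb{N}$, one then splits into the two cases $d\divides n$ and $d\doesnotdivide n$. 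The infinite sums on $m$ are interpreted via the meromorphic continuation \eqref{eq:regularization_of_infinite_sum}, and the hypothesis $\powerpoly\ll_{\oldksecond} 0$ guarantees convergence (concretely $\mathrm{Re}(\oldksecond+\powerpoly)<-1$) by the estimates used in Lemma \ref{lemma:Lemma5.1inKLSR}.

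In the case $d\divides n$, one has $u_{c,d}=0$ and $v_{c,d}=cd/\gcd(c,d)$, so the inner sum reduces to $\sum_{m\neq 0} m^\powerpoly=2\delta_{2\divides \powerpoly}\zeta(-\powerpoly)$. The weighted sum in $c$ is then
\[ 2\delta_{2\divides \powerpoly}\zeta(-\powerpoly)\, d^\powerpoly \sum_{c\in\mathbb{N}} \frac{c^{\oldksecond+\powerpoly}}{\gcd(c,d)^{\powerpoly}}, \]
which is evaluated by Lemma \ref{lemma:Lemma5.1inKLSR} (applied with $s=-\oldksecond$ and $k=-\powerpoly$) and yields the Euler product over primes $p\divides d$ claimed in the first formula. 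The correction term $-\delta_{c\divides n}(-n)^\powerpoly$ summed against $c^{\oldksecond}$ contributes $(-1)^{\powerpoly+1}n^\powerpoly\sigma_{\oldksecond}(n)$, giving the remaining piece.

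In the case $d\doesnotdivide n$, the sum over $c\in\mathbb{N}$ with $\gcd(c,d)\divides n$ is reorganised by residue class modulo $d$: write $c=c'+kd$ with $0<c'\le d$ and $k\ge 0$, retaining only $c'$ with $\gcd(c',d)\divides n$. Since $\gcd(c,d)$, $u_{c,d}$, and hence the inner $m$-sum depend on $c$ only through $c'$, the $k$-sum factors out as $d^{\oldksecond+\powerpoly}\zeta(-\oldksecond-\powerpoly,c'/d)$ by the definition of the Hurwitz zeta function, while $v_{c,d}^\powerpoly$ contributes a further factor $d^\powerpoly/\gcd(c',d)^\powerpoly$. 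Combined with the $-\delta_{c\divides n}(-n)^\powerpoly$ correction (and noting that $c\divides n$ forces $\gcd(c,d)\divides n$, so the two domains of summation are consistent), this produces the second claimed expression.

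The main bookkeeping obstacle is tracking which integer pairs are excluded by $a,b\neq 0$: when $d\divides n$ the $m=0$ term (the $b=0$ case) is removed by the $\delta_{d\divides n}\,0^\powerpoly$ correction, and when additionally $c\divides n$ the specific $m=-n\gcd(c,d)/(cd)$ (the $a=0$ case) must be further removed, producing the $(-n)^\powerpoly$ subtraction; a careful case analysis as in the proof of Lemma \ref{lemma:rewriting_stuff_in_terms_of_Hurwitz_zeta} ensures that neither bad term is double-counted when both $c\divides n$ and $d\divides n$.
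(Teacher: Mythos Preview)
Your proposal is correct and follows essentially the same route as the paper's own proof: apply Lemma~\ref{lemma:rewriting_stuff_in_terms_of_Hurwitz_zeta} with $f(x)=x^{\powerpoly}$, split into $d\divides n$ (where $u_{c,d}=0$ and Lemma~\ref{lemma:Lemma5.1inKLSR} handles the $c$-sum) versus $d\doesnotdivide n$ (where one regroups $c$ by residue $c'$ modulo $d$, uses the periodicity of $\gcd(\cdot,d)$ and $u_{\cdot,d}$, and recognises the $k$-sum as a Hurwitz zeta). The only cosmetic point is that writing the correction as $\delta_{d\divides n}\,0^{\powerpoly}$ is formally undefined for $\powerpoly<0$; the paper (and your final paragraph) handles this by simply excising the $m=0$ term directly when $d\divides n$, which is the intended meaning.
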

	\begin{proof}	[Proof of Lemma \ref{lemma:ldn_is_equal_to_something_poly}]	
		In what follows, we consider two different cases: $d \divides n$ and $d \doesnotdivide  n$.
		
		First  assume $d\divides n$. 
		In this case, we have $\gcd(c,d) \divides n$, and the definition of $b^*$ in  \eqref{def:bstar} implies  $b^*=0$. Hence, by \eqref{def:yn}, $u_{c,d} = 0$. 
		With the help of Lemma \ref{lemma:rewriting_stuff_in_terms_of_Hurwitz_zeta}, we write 
		\begin{align}
		\sum_{\stackrel{a, b \in \mathbb{Z} \setminus \{0\}  }{ad-bc=n}} (bc)^\powerpoly &
		=\sum_{m \in \mathbb{Z} \setminus \{0\} }(m v_{c,d})^\powerpoly- (-n)^\powerpoly \delta_{c \divides n} \nonumber
	\\ & 
		\stackrel{\eqref{def:yn}}{=}2 \delta_{2 \divides \powerpoly} \left( \frac{c d}{\gcd(c,d)} \right)^{\powerpoly} \zeta(-\powerpoly) + (-1)^{\powerpoly+1} \delta_{c \divides n} n^\powerpoly.  \label{eq:intermediate_sum_of_polynomials_overad-bc}
		\end{align}
		Multiplying by $c^{\oldksecond}$ and summing over $c \in \mathbb{N}$, we obtain 
		\begin{align}
		 \sum_{c \in \mathbb{N}}c^{\oldksecond} &\left( 2 \delta_{2 \divides \powerpoly} \left( \frac{ c d}{\gcd(c,d)} \right)^{\powerpoly} \zeta(-\powerpoly) + (-1)^{\powerpoly+1} \delta_{c \divides n} n^\powerpoly \right) \nonumber \\
		&= 2 d^{\powerpoly}  \delta_{2 \divides \powerpoly} \zeta(-\powerpoly)\sum_{c \in \mathbb{N}}  \frac{c^{\powerpoly +\oldksecond}}{\gcd(c,d)^{\powerpoly}}   + (-1)^{\powerpoly+1} n^\powerpoly \sigma_{\oldksecond}(n). \label{eq:intermediate_sum_of_polynomials_overad-bc2}
		\end{align}
	Finally, Lemma  \ref{lemma:Lemma5.1inKLSR} gives
		\[\sum_{c \in \mathbb{N}} \frac{c^{\powerpoly +\oldksecond}}{\gcd(c,d)^{\powerpoly}} =
		\zeta(-\oldksecond-\powerpoly) \prod_{p | d} \left( 1 + (1-p^{\powerpoly})(p^{\oldksecond}+\ldots + p^{v_{p}(d) \oldksecond }) \right),\]
		and, together with \eqref{eq:intermediate_sum_of_polynomials_overad-bc2}, this implies the first statement of the lemma.
		
		When $d \doesnotdivide n$ and $\gcd(c,d) \divides n$, \begin{align}
		\sum_{\stackrel{a, b \in \mathbb{Z} \setminus \{0\}  }{ad-bc=n}} (bc)^\powerpoly &=  (-1)^{\powerpoly+1} \delta_{c \divides n} n^\powerpoly + \sum_{m \in \mathbb{Z}} ((m+u_{c,d}) v_{c,d})^\powerpoly  \nonumber \\
		%&=  v_{c,d}^{\powerpoly} \sum_{m \in \mathbb{Z}} (m+u_{c,d})^{\powerpoly} + (-1)^{\powerpoly+1} \delta_{c \divides n} n^\powerpoly  \nonumber \\
		& \stackrel{\eqref{def:yn}}{=}  (-1)^{\powerpoly+1} \delta_{c \divides n} n^\powerpoly + \left(\frac{ c d}{\gcd(c,d)}\right)^{\powerpoly} \sum_{m \in \mathbb{Z}} (m+u_{c,d})^{\powerpoly}   ,
		 \label{eq:d_not_dividing_n_intermediate_equality}
		\end{align}
		and when $d \doesnotdivide n$ and $\gcd(c,d) \doesnotdivide n$,
		\[
		\sum_{\stackrel{a, b \in \mathbb{Z} \setminus \{0\}  }{ad-bc=n}} (bc)^\powerpoly = 0.
		\] 
		When $d\doesnotdivide n$, multiplying \eqref{eq:d_not_dividing_n_intermediate_equality} by $c^{\oldksecond}$ and summing over $c$, we  obtain
		\begin{align}\label{eq:summingoverc}
	\sum_{c \in \mathbb{N}} c^{\oldksecond} \sum_{\stackrel{a, b \in \mathbb{Z} \setminus \{0\}  }{ad-bc=n}} (bc)^\powerpoly	& 
		=(-1)^{\powerpoly+1} n^\powerpoly \sigma_{\oldksecond}(n)+ \sum_{\stackrel{c \in \mathbb{N}}{\gcd(c,d) \divides n}} \frac{c^{\oldksecond+\powerpoly} d^\powerpoly}{\gcd(c,d)^\powerpoly}   \sum_{m \in \mathbb{Z}}  (m+u_{c,d})^{\powerpoly} .
		\end{align}
We note that for any function $f: \mathbb{N} \to \mathbb{C}$,
		\begin{equation}\label{eq:regroupping}
		\sum_{\stackrel{c \in \mathbb{N}}{\gcd(c,d) \divides n}} f(c) =  \sum_{ \stackrel{0 < c' \le d}{ \gcd(c', d) \divides n} }\sum_{j=0}^\infty f(jd +c'),
		\end{equation}
		and for any $j \in \mathbb{N}_0$  and $c' \in \mathbb{Z}$,
		\begin{equation}\label{eq:gcd_doesnt_change_much}
		\gcd(jd+c',d) = \gcd(c',d) \quad\text {and} \quad u_{c',d}=u_{jd+c',d}.
		\end{equation}
		Rewriting the second term in \eqref{eq:summingoverc} with the help of \eqref{eq:regroupping},  we get 
		\begin{align*}
		 \sum_{\stackrel{0<c'\le d}{\gcd(c'+jd,d)|n}} \sum_{j=0}^\infty & \frac{(jd+c')^{\oldksecond+\powerpoly} d^{\powerpoly}    }{\gcd(c'+jd,d)^{\powerpoly}}   \sum_{m\in \mathbb{Z}} (m+u_{c'+jd,d})^{\powerpoly} \\
		 & \stackrel{\eqref{eq:gcd_doesnt_change_much}}{=}
		\sum_{\stackrel{0<c'\le d}{\gcd(c',d)|n}} \sum_{j=0}^\infty  \frac{ (jd+c')^{\oldksecond+\powerpoly} d^{\powerpoly}}{\gcd(c',d)^{\powerpoly}} \sum_{m\in \mathbb{Z}}   (m+u_{c',d})^{\powerpoly}  \\
		&=  d^{\oldksecond+2\powerpoly}   \sum_{\stackrel{0<c'\le d}{\gcd(c',d)|n}}  \frac{ \zeta(-\oldksecond-\powerpoly,\frac{c'}{d})}{\gcd(c',d)^{\powerpoly}}   \sum_{m\in \mathbb{Z}} (m+u_{c',d})^{\powerpoly},		\end{align*}
		where $u_{c,d}$ is defined as in \eqref{def:xn}. 
		This implies the second statement of the Lemma.
	\end{proof}

	\begin{proof}[Proof of Theorem \ref{thm:final_exact_result}]
Assuming the factorization 
	\[
	n_1 = a d, \quad n_2 =  -b c, \quad a, b, c, d \in \mathbb{Z},
	\]
	we rewrite each summand in the left hand side  of \eqref{eq:target_sum1} as 
	\begin{align*}
	\sum_{d \in \mathbb{N}} d^{\oldkfirst} \sum_{c \in \mathbb{N} } c^{\oldksecond}  \sum_{\stackrel{a,b\in \mathbb{Z} \setminus \{0\}}{ad - bc =n}} (-bc)^\powerpoly.
	\end{align*}
Then we  multiply the formula in  Lemma \ref{lemma:ldn_is_equal_to_something_poly} by $(-1)^\powerpoly$ and obtain \eqref{eq:target_sum1}.
	
	Now, we need to deal with the meromorphic continuation. We note that  $2 \delta_{2 \divides \powerpoly}$ can be meromorphically continued by taking $1+e^{2 \pi i \powerpoly}$. 
	We consider the second term on the right side of~\eqref{eq:target_sum1}. There are three possible cases:
	\begin{enumerate}
		\item If $\oldksecond+\powerpoly \in 2\mathbb{N}+1$, then $\delta_{2 \divides n}$ vanishes.
		\item If $\oldksecond+\powerpoly \in 2\mathbb{N}$, then $\zeta(-\oldksecond-\powerpoly)$ vanishes.
		\item If $\oldksecond+\powerpoly=0$, then $\oldksecond \in 2 \mathbb{Z}$ implies $\powerpoly \in 2 \mathbb{Z}$. Using \eqref{eq:evaluations_gcd_dirichlet}, we obtain that the second line in the right hand side of \eqref{eq:target_sum1} becomes $(-1)^{\powerpoly+1} \zeta(\oldksecond) \sigma_{\oldkfirst}(n)$. Since we assumed $\powerpoly = - \oldksecond$ and $\oldksecond$ is even, $(-1)^{\powerpoly+1}=-1$.
	\end{enumerate}
	
 It remains to show, for $d \doesnotdivide n$,   
	\begin{align}\label{imposter_that_pretends_to_vanish_but_wil_probably_become_a_factor_of_L_functions}
	\sum_{\stackrel{0<c'\le d}{\gcd(c',d)|n}}  \frac{ \zeta(-\oldksecond-\powerpoly,\frac{c'}{d})}{\gcd(c',d)^{\powerpoly}}  \sum_{m\in \mathbb{Z}} (m+u_{c',d})^{\powerpoly} = 0.
	\end{align}
	 We note the element $c'=d$ is not present in the sum \eqref{imposter_that_pretends_to_vanish_but_wil_probably_become_a_factor_of_L_functions} because, in this case, $\gcd(c',d)=d$ divides $n=ad-bc$, but this contradicts the assumption $d \doesnotdivide  n$.	
	For any other $c'$, there is a pair $d-c'$ in the sum $0 < c' \le d$ (for $c'=d/2$, we consider~$c'$ to be its own pair). We note that  
$	\gcd(d-c',d)=\gcd(c',d)$	and 
$	u_{d-c',d} = -u_{c',d}  
$	 imply
	\begin{align}
	 \frac{ \zeta(-\oldksecond-\powerpoly,1-\frac{c'}{d})}{\gcd(d-c',d)^{\powerpoly}}  \sum_{m\in \mathbb{Z}} (m+u_{d-c',d})^{\powerpoly} & = 	 \frac{ \zeta(-\oldksecond-\powerpoly,1-\frac{c'}{d})}{\gcd(c',d)^{\powerpoly}}  \sum_{m\in \mathbb{Z}} (m-u_{c',d})^{\powerpoly} \nonumber \\
	 & \stackrel{\eqref{eq:lemma2.2full_sum}}{=} 	(-1)^\powerpoly  \frac{ \zeta(-\oldksecond-\powerpoly,1-\frac{c'}{d})}{\gcd(c',d)^{\powerpoly}}   \sum_{m\in \mathbb{Z}} (m+u_{c',d})^{\powerpoly}. \label{eq:parity_stuff}
	\end{align}
We can rewrite  \eqref{imposter_that_pretends_to_vanish_but_wil_probably_become_a_factor_of_L_functions} by grouping together contributions from   the elements $c'$ and $d-c'$ and applying \eqref{eq:parity_stuff} to get
	\[
	\frac{1}{2}\sum_{\stackrel{0<c'\le d}{\gcd(c',d)|n}}  \frac{ \zeta(-\oldksecond-\powerpoly,\frac{c'}{d})+(-1)^\powerpoly \zeta(-\oldksecond-\powerpoly,1-\frac{c'}{d})}{\gcd(c',d)^{\powerpoly}}  \cdot \sum_{m\in \mathbb{Z}} (m+u_{c',d})^{\powerpoly}.
	\]
	In turn, \eqref{ref:vanishing_for_bernoulli} implies that for $\oldksecond+\powerpoly \in \mathbb{N}_0$ and $\oldksecond \in 2 \mathbb{Z}$, the sum above vanishes. 
\end{proof}

\subsection{Proof of Theorem \ref{thm:final_exact_result2}}

In order to prove Theorem \ref{thm:final_exact_result2}, we need the following lemma.

	\begin{lemma}\label{lemma:ldn_is_equal_to_something_log}
		Let $n\in\N$, $d \in \mathbb{N}$, $\powerlog \in \mathbb{Z}$ with $\powerlog \ll_{r_2} 0$, and $\text{Re}(t_2)<-2-Q$, then the sum
		\begin{align}\label{eq:sum_over_c_of_bc_logarithm}
		\sum_{c \in \mathbb{N}} c^{\oldksecond} \sum_{\stackrel{a, b \in \mathbb{Z} \setminus \{0\}  }{ad-bc=n}} (bc)^\powerlog \log |bc|
		\end{align}
	equals 
		\begin{align}\label{eq:sums_with_log_d_divides_n_part}
		&    (-1)^{\powerlog+1} \sigma_{\oldksecond}(n) n^\powerlog \log |n|  \\
		& \ \ \ \ \  + 2 \delta_{2 \divides \powerlog} d^\powerlog \zeta(-\powerlog) \left( \frac{\partial}{\partial t_1} + \frac{\partial}{\partial t_2} \right) \left. \left( \sum_{c \in \N} c^{t_1} \gcd(c,d)^{t_2} \right)\right|_{\substack{t_1=\oldksecond+\powerlog\\ t_2=-\powerlog}} \nonumber \\
		& \ \ \ \ \ + 2 \delta_{2 \divides \powerlog} d^\powerlog (\log(d) \zeta(-\powerlog) -\zeta'(-\powerlog))  \sum_{c \in \N} \frac{c^{\oldksecond+\powerlog}}{\gcd(c,d)^\powerlog}, \nonumber 
		\end{align}
		for $d \divides n$, and	 equals 
		\begin{align*} 
		&  (-1)^{\powerlog+1} \sigma_{\oldksecond}(n) n^\powerlog \log |n| \nonumber \\
		&\ \ \ \ \  +  \left(\frac{ c d}{\gcd(c,d)}\right)^{\powerlog} \log \left| \frac{ c d}{\gcd(c,d)} \right| \sum_{m \in \mathbb{Z}} (m+u_{c,d})^{\powerlog} \nonumber \\
		&\ \ \ \ \  + \frac{d^{\oldksecond+ 2 \powerlog}}{(\gcd(c,d))^\powerlog} \sum_{\stackrel{0 < c' \le d}{\gcd(c'+jd, d) \divides n}} \zeta(-\oldksecond-\powerlog, \tfrac{c'}{d})  \sum_{m \in \mathbb{Z}} \frac{ \log|m + u_{c'+jd,d}|}{(m+u_{c'+jd,d})^{-\powerlog}}
\end{align*}
for $d \doesnotdivide n$.
\end{lemma}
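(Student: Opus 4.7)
The plan is to parallel the proof of Lemma \ref{lemma:ldn_is_equal_to_something_poly}, with $(bc)^P$ replaced by $(bc)^Q \log|bc|$. First I would apply Lemma \ref{lemma:rewriting_stuff_in_terms_of_Hurwitz_zeta} with $f(x) = x^Q \log|x|$ to the inner sum, rewriting the constrained sum over $a, b \in \mathbb{Z} \setminus \{0\}$ with $ad-bc = n$ as an unconstrained sum over $m \in \mathbb{Z}$ of $((m+u_{c,d})v_{c,d})^Q \log|(m+u_{c,d})v_{c,d}|$, minus the boundary contributions $\delta_{c \divides n} f(-n)$ and $\delta_{d \divides n} f(0)$ (the latter handled via meromorphic regularization). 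The crucial observation is the multiplicative splitting $\log|(m+u_{c,d})v_{c,d}| = \log|m+u_{c,d}| + \log|v_{c,d}|$, which decouples each summand into a Hurwitz-zeta piece and a logarithm-free piece that mirrors the polynomial lemma's evaluation.

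In the case $d \divides n$, one has $u_{c,d} = 0$, and after extracting $v_{c,d}^Q$ the inner sum reduces to $\sum_{m \neq 0} m^Q \log|m|$ plus $\log|v_{c,d}| \sum_{m \neq 0} m^Q$. Via \eqref{eq:regularization_of_infinite_sum} and \eqref{eq:regularization_of_infinite_sum2} these evaluate to $-2\zeta'(-Q)\delta_{2 \divides Q}$ and $2\zeta(-Q)\delta_{2 \divides Q}$ respectively, while the boundary subtraction $\delta_{c \divides n}(-n)^Q \log|n|$ assembles, after multiplying by $c^{r_2}$ and summing over $c \in \mathbb{N}$, into the $(-1)^{Q+1} n^Q \sigma_{r_2}(n) \log|n|$ term. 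Expanding $\log|v_{c,d}| = \log c + \log d - \log\gcd(c,d)$ and identifying the resulting $c$-sums with partial derivatives in $t_1, t_2$ of $\sum_c c^{t_1} \gcd(c,d)^{t_2}$ evaluated at $(r_2+Q, -Q)$, all governed by Lemma \ref{lemma:Lemma5.1inKLSR}, should yield exactly \eqref{eq:sums_with_log_d_divides_n_part}.

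In the case $d \doesnotdivide n$ the inner sum vanishes unless $\gcd(c,d) \divides n$. For such $c$, applying Lemma \ref{lemma:rewriting_stuff_in_terms_of_Hurwitz_zeta} and splitting the logarithm produces $v_{c,d}^Q \log|v_{c,d}| \sum_m (m+u_{c,d})^Q$ together with $v_{c,d}^Q \sum_m (m+u_{c,d})^Q \log|m+u_{c,d}|$, minus the boundary term $\delta_{c \divides n}(-n)^Q \log|n|$. Multiplying by $c^{r_2}$, regrouping via \eqref{eq:regroupping}, and using the invariances \eqref{eq:gcd_doesnt_change_much} converts the $c$-sum into Hurwitz zeta values $\zeta(-r_2-Q, c'/d)$ exactly as in the polynomial case, producing the claimed formula for $d \doesnotdivide n$.

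The main obstacle will be the careful regularization of the divergent inner sums $\sum_m (m+u_{c,d})^Q$ and $\sum_m (m+u_{c,d})^Q \log|m+u_{c,d}|$, most delicately at $u_{c,d} = 0$ where naive terms $0^Q$ and $0^Q \log 0$ appear; the identities \eqref{eq:regularization_of_infinite_sum} and \eqref{eq:regularization_of_infinite_sum2} provide the correct meromorphic interpretation and force the parity factors $\delta_{2 \divides Q}$ via the $e^{\pi i Q}$ pairing. Absolute convergence of the double sum over $c$ and $m$ under the stated hypotheses $Q \ll_{r_2} 0$ and $\text{Re}(r_2) < -2 - Q$ justifies both the interchange of summations and the differentiation under the Dirichlet series in $t_1, t_2$ by standard estimates.
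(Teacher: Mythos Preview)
Your proposal is correct and follows essentially the same approach as the paper's proof: apply Lemma~\ref{lemma:rewriting_stuff_in_terms_of_Hurwitz_zeta} with $f(x)=x^Q\log|x|$, split $\log|(m+u_{c,d})v_{c,d}|=\log|m+u_{c,d}|+\log|v_{c,d}|$, evaluate the $d\mid n$ case via $u_{c,d}=0$ and the expansion $\log v_{c,d}=\log c+\log d-\log\gcd(c,d)$ identified with $\partial_{t_1},\partial_{t_2}$ of $\sum_c c^{t_1}\gcd(c,d)^{t_2}$, and treat the $d\nmid n$ case by the regrouping \eqref{eq:regroupping}--\eqref{eq:gcd_doesnt_change_much} exactly as in Lemma~\ref{lemma:ldn_is_equal_to_something_poly}. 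The only cosmetic difference is that the paper cancels the $\delta_{d\mid n}f(0)$ subtraction against the $m=0$ term directly (writing $\sum_{m\neq 0}$), rather than invoking a separate regularization step.
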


\begin{proof}[Proof of Lemma \ref{lemma:ldn_is_equal_to_something_log}]
As in the proof of Lemma \ref{lemma:ldn_is_equal_to_something_poly}, we use Lemma \ref{lemma:rewriting_stuff_in_terms_of_Hurwitz_zeta} and note again that when $d\divides n$, we have  $b^*=0$. Thus  for $d \divides n$, we write 
		\begin{align}
		\sum_{\stackrel{a, b \in \mathbb{Z} \setminus \{0\}  }{ad-bc=n}} (bc)^\powerlog \log|bc| 
		= & \sum_{m \in \mathbb{Z} \setminus \{0\} }(m v_{c,d})^\powerlog \log|m v_{c,d}| - (-n)^\powerlog \log|n| \delta_{c \divides n} \nonumber
	\\  
		\stackrel{\eqref{def:yn}}{=} & (-1)^{\powerlog+1} \delta_{c \divides n} n^\powerlog \log |n| \nonumber \\ 
		&\ + 2 \delta_{2 \divides \powerlog} \left( \frac{c d}{\gcd(c,d)} \right)^{\powerlog} \left(\log\left(\frac{c d}{\gcd(c,d)} \right) \zeta(-\powerlog)-\zeta'(-\powerlog) \right) \nonumber \\
	=	& (-1)^{\powerlog+1} \delta_{c \divides n} n^\powerlog \log |n| \nonumber \nonumber \\
		&\ + 2 \delta_{2 \divides \powerlog} \frac{(cd)^\powerlog}{\gcd(c,d)^\powerlog}\log(c) \zeta(-\powerlog) \nonumber \\
		&\ - 2 \delta_{2 \divides \powerlog} \frac{(cd)^\powerlog}{\gcd(c,d)^\powerlog}\log(\gcd(c,d)) \zeta(-\powerlog) \nonumber \\
		& \ + 2 \delta_{2 \divides \powerlog} \frac{(cd)^\powerlog}{\gcd(c,d)^\powerlog}(\log(d) \zeta(-\powerlog) -\zeta'(-\powerlog))\nonumber.
		\end{align}

We multiply the expression above by $c^{\oldksecond}$ and sum over $c \in \mathbb{N}$. The first term becomes 
\begin{align*}
\sum_{c \in \mathbb{N}} c^{\oldksecond} (-1)^{\powerlog+1} \delta_{c \divides n} n^\powerlog \log |n| &= (-1)^{\powerlog+1}  n^\powerlog \log |n| \sum_{c \in \N} c^{\oldksecond} \delta_{c \divides n} \\
&= (-1)^{\powerlog+1}  n^\powerlog \log |n| \sigma_{\oldksecond}(n).
\end{align*}

The second line becomes 
\begin{align}
    \sum_{c \in \mathbb{N}} c^{\oldksecond}  2 \delta_{2 \divides \powerlog} \frac{(cd)^\powerlog}{\gcd(c,d)^\powerlog}\log(c) \zeta(-\powerlog) \nonumber 
    &= 2 \delta_{2 \divides \powerlog} d^\powerlog \zeta(-\powerlog) \sum_{c \in \N} \frac{c^{\oldksecond+\powerlog}}{\gcd(c,d)^\powerlog} \log(c)\nonumber\\
    &= 2 \delta_{2 \divides \powerlog} d^\powerlog \zeta(-\powerlog) \frac{\partial}{\partial t_1} \left. \left( \sum_{c \in \N} c^{t_1} \gcd(c,d)^{t_2} \right)\right|_{\substack{t_1=\oldksecond+\powerlog\\t_2=-\powerlog}} \label{eq:summand_in_blawdkjfhk}
\end{align}
since $\displaystyle\sum_{c \in \N} \frac{c^{\oldksecond+\powerlog}}{\gcd(c,d)^\powerlog} \log(c)$ is absolutely convergent for $\text{Re}(t_2)<-2-Q$.

The third line becomes 
\begin{align}
     - \sum_{c \in \N} c^{\oldksecond}  2 \delta_{2 \divides \powerlog}& \frac{(cd)^\powerlog}{\gcd(c,d)^\powerlog}\log(\gcd(c,d)) \zeta(-\powerlog) \nonumber \\
    &= - 2 \zeta(-\powerlog)  \delta_{2 \divides \powerlog} d^\powerlog \sum_{c \in \N}    \frac{ c^{\oldksecond+\powerlog} }{\gcd(c,d)^\powerlog}\log(\gcd(c,d)) \label{eq:summandfwsdwr}\\
    &= 2 \zeta(-\powerlog)  \delta_{2 \divides \powerlog} d^\powerlog \frac{\partial}{\partial t_2} \left(  \left.  \sum_{c \in \N} c^{t_1} \gcd(c,d)^{t_2} \right) \right|_{\substack{t_1=\oldksecond+\powerlog\\t_2=-\powerlog}} \nonumber 
\end{align}
since $\displaystyle\sum_{c \in \N} \frac{c^{\oldksecond+\powerlog}}{\gcd(c,d)^\powerlog} \log(c)$ is absolutely convergent for $\text{Re}(t_2)<-2-Q$.

The fourth line becomes 
\begin{align}
    \sum_{c \in \N} c^{\oldksecond} 2 \delta_{2 \divides \powerlog}& \frac{(cd)^\powerlog}{\gcd(c,d)^\powerlog}(\log(d) \zeta(-\powerlog) -\zeta'(-\powerlog)) \nonumber \\
    &= 2 \delta_{2 \divides \powerlog} d^\powerlog (\log(d) \zeta(-\powerlog) -\zeta'(-\powerlog))  \sum_{c \in \N} \frac{c^{\oldksecond+\powerlog}}{\gcd(c,d)^\powerlog}. \label{eq:summand_aldfkjlskfg342}
\end{align}

When $d \doesnotdivide n$ and $\gcd(c,d) \divides n$, \begin{align}
		\sum_{\stackrel{a, b \in \mathbb{Z} \setminus \{0\}  }{ad-bc=n}} (bc)^\powerlog \log|bc| &=  (-1)^{\powerlog+1} \delta_{c \divides n} n^\powerlog \log|n| + \sum_{m \in \mathbb{Z}} ((m+u_{c,d}) v_{c,d})^\powerlog \log| (m+u_{c,d}) v_{c,d}|  \nonumber \\
		& \stackrel{\eqref{def:yn}}{=} (-1)^{\powerlog+1} \delta_{c \divides n} n^\powerlog \log|n| \nonumber \\
		&\ \ \ \ \ +  \left(\frac{ c d}{\gcd(c,d)}\right)^{\powerlog} \log \left| \frac{ c d}{\gcd(c,d)} \right| \sum_{m \in \mathbb{Z}} (m+u_{c,d})^{\powerlog} \nonumber \\
		& \ \ \ \ \ + \left(\frac{ c d}{\gcd(c,d)}\right)^{\powerlog} \sum_{m \in \mathbb{Z}} (m+u_{c,d})^{\powerlog} \log|m + u_{c,d}|.   \label{eq:d_not_dividing_n_intermediate_equality_for_log}
		\end{align}
Multiplying by $c^{\oldksecond}$ and summing over $c$, we leave the first and the second line as it is and rewrite the third line similar to the proof of  Lemma \ref{lemma:ldn_is_equal_to_something_poly},
\begin{align*}
\sum_{c \in \N} c^{\oldksecond} & \left(\frac{ c d}{\gcd(c,d)}\right)^{\powerlog} \sum_{m \in \mathbb{Z}} (m+u_{c,d})^{\powerlog} \log|m + u_{c,d}| \\
&= \sum_{j=0}^\infty \sum_{\stackrel{0 < c' \le d}{\gcd(c'+jd, d) \divides n}}  \frac{(jd+c')^{\oldksecond+\powerlog} d^\powerlog}{(\gcd(c,d))^\powerlog} \sum_{m \in \mathbb{Z}} (m+u_{c'+jd,d})^\powerlog \log|m + u_{c'+jd,d}| \\
&=  \frac{d^{\oldksecond+ 2 \powerlog}}{(\gcd(c,d))^\powerlog} \sum_{\stackrel{0 < c' \le d}{\gcd(c', d) \divides n}} \zeta(-\oldksecond-\powerlog, \tfrac{c'}{d})  \sum_{m \in \mathbb{Z}} (m+u_{c',d})^\powerlog \log|m + u_{c',d}|.
\end{align*}
\end{proof}

\begin{proof}[Proof of Theorem \ref{thm:final_exact_result2}]
    The theorem follows from Lemma \ref{lemma:ldn_is_equal_to_something_log} similarly to the proof of Theorem \ref{thm:final_exact_result}. 
    
  The first term in the right side of \eqref{eq:target_sum2} admits a meromorphic continuation trivially. 
For the second and the third summands in \eqref{eq:target_sum2}, we consider the cases $\powerlog \in \mathbb{N}$ and $\powerlog=0$ separately.
    
    For $\powerlog \in \N$, $\delta_{2 \divides \powerlog} \zeta(-\powerlog)$ vanishes. This implies that what remains from the second and the third terms is 
    \begin{align}\label{eq:blah}
    -    (-1)^\powerlog 2 \delta_{2 \divides \powerlog} \zeta'(-\powerlog) \sum_{d \divides \powerlog} \sum_{c \in \mathbb{N}} \frac{c^{\oldksecond+\powerlog}}{\gcd(c,d)^\powerlog}.
    \end{align}
   By Lemma \ref{lemma:Lemma5.1inKLSR}, \eqref{eq:blah} is a multiple of $\delta_{2 \divides \powerlog}  \zeta(-\oldksecond-\powerlog)$. If we additionally assume that  $\powerlog+\oldksecond \in \mathbb{N}$, then $\delta_{2 \divides \powerlog}  \zeta(-\oldksecond-\powerlog) = 0$, and \eqref{eq:blah} vanishes. 
    
    For $\powerlog = 0$, by Lemma \ref{ref:lemma_about_Q=0}, \eqref{eq:summand_in_blawdkjfhk} becomes 
    \[
-2 \zeta(0) \zeta'(-\oldksecond) = \zeta'(-\oldksecond).
    \]
    For $\powerlog = 0$, by Lemma \ref{ref:lemma_about_Q=0},  \eqref{eq:summandfwsdwr} becomes  
\[
-2 \zeta(0) \sum_{c \in \mathbb{N}} c^{\oldksecond} \log|\gcd(c,d)| = \sum_{c \in \mathbb{N}} c^{\oldksecond} \log|\gcd(c,d)| = \zeta(-\oldksecond) \sum_{\ell \divides d} \Lambda(\ell) \ell^{\oldksecond}.
\]
For $\powerlog = 0$, \eqref{eq:summand_aldfkjlskfg342} becomes 
\[
2 (\log(d) \zeta(0)-\zeta'(0)) \zeta(-\oldksecond) = - \zeta(-\oldksecond)  \log \left(\frac{d}{2 \pi }\right).
\]
Combining these three terms, we obtain
\[
\zeta'(-\oldksecond)+\zeta(-\oldksecond) \sum_{\ell \divides d} \Lambda(\ell) \ell^{\oldksecond} - \zeta(-\oldksecond) \log \left(\frac{d}{2 \pi }\right).
\]
For $\oldksecond>0$, this is equal to $\zeta'(-\oldksecond)$, and for $\oldksecond=0$, this is equal to $- \log(2 \pi)$.

Finally, the sum  
    \[
    \sum_{m \in \mathbb{Z}} (m+u_{c,d})^\powerlog
    \]
    vanishes by \eqref{eq:crazy_vanihsing}. The inner sum in the last term on the right side of \eqref{eq:target_sum2} vanishes by the similar argument as for  \eqref{imposter_that_pretends_to_vanish_but_wil_probably_become_a_factor_of_L_functions} in Theorem \ref{thm:final_exact_result}.
\end{proof}
\bibliography{draft3} 
\bibliographystyle{amsplain}

\end{document}